\def\var{{\rm var}}
\def\1{1\hskip-2.6pt{\rm l}}
\def\R{{\mathbb{R}}}
\def\E{{\mathbb{E}}}
\def\P{{\mathbb{P}}}
\def\A{{\mathcal A}}
\def\F{{\mathcal{F}}}
\def\M{\mathcal M}
\def\NN{{{\mathcal N}}}
\def\X{{\mathcal{X}}}
\def\S{{\mathcal S}_{n}}
\def\T{{\mathcal T}}
\def\eps{{\xi}}
\newcommand{\pen}{\mathop{\rm pen}\nolimits}
\newcommand{\eref}[1]{(\ref{#1})}
\newcommand{\pa}[1]{\left({#1}\right)}
\newcommand{\cro}[1]{\left[{#1}\right]}
\newcommand{\ab}[1]{\left|{#1}\right|}
\newcommand{\ac}[1]{\left\{{#1}\right\}}
\newtheorem{thm}{Theorem}
\newtheorem{lem}{Lemma}
\newtheorem{prop}{Proposition}
\newtheorem{cor}{Corollary}
\newtheorem{Ass}{Assumption}
\def\telque{\big |}
\def\<{{\langle}}
\def\>{{\rangle}}
\begin{document}
\title[Bernstein-type inequality]{A Bernstein-type inequality for suprema of random processes with applications to model selection in non-Gaussian regression}
\date{November 2008, revised july 2009} 
\author{Yannick Baraud} 
\address{Universit\'e de Nice Sophia-Antipolis, Laboratoire J-A Dieudonn\'e,
  Parc Valrose, 06108 Nice cedex 02} 
\email{baraud@unice.fr}
%%
%\author{???} 
%\address{???} 
%\email{??}
%%
\keywords{Bernstein's inequality; Model selection; Regression; Supremum of a random process} 
\subjclass[2000]{60G70, 62G08}
\begin{abstract}
Let $\pa{X_{t}}_{t\in T}$ be a family of real-valued centered random variables indexed by a countable set $T$. In the first part of this paper, we establish exponential bounds for the deviation probabilities of the supremum $Z=\sup_{t\in T}X_{t}$ by using the generic chaining device introduced in Talagrand~\citeyearpar{MR1361756}. Compared to concentration-type inequalities, these bounds offer the advantage to hold under weaker conditions on the family $\pa{X_{t}}_{t\in T}$. The second part of the paper is oriented towards statistics. We consider the regression setting $Y=f+\eps$ where $f$ is an unknown vector of $\R^{n}$ and $\eps$ is a random vector the components of which are independent, centered and admit finite Laplace transforms in a neighborhood of 0. Our aim is to estimate $f$ from the observation of $Y$ by mean of a model selection approach among a collection of linear subspaces of $\R^{n}$. The selection procedure we propose is based on the minimization of a penalized criterion 
the penalty of which is calibrated by using the deviation bounds established in the first part of this paper. More precisely, we study suprema of random variables of the form $X_{t}=\sum_{i=1}^{n}t_{i}\eps_{i}$ when $t$ varies among the unit ball of a linear subspace of $\R^{n}$. 
We finally show that our estimator satisfies some oracle-type inequality under suitable assumptions on the metric structures of the linear spaces of the collection.
\end{abstract}
\maketitle
%%%%%%%%%%%%%%%%%%%%%%%%%%%%%%
%

\section{introduction}
\subsection{What is this paper about?}
The present paper contains two parts. The first one is oriented towards probability. We consider  a family $\pa{X_{t}}_{t\in T}$ of real-valued centered random variables indexed by a countable set $T$ and give an exponential bound for the probability of deviation of the supremum $Z=\sup_{t\in T}X_{t}$. 
The result is established under the assumption that the Laplace transforms of the increments $X_{t}-X_{s}$ for $s,t\in T$ satisfy some Bernstein-type bounds. This assumption is convenient to handle simultaneously the cases of subgaussian increments (which is the typical case in the literature) as well as more ``heavy tailed" ones for which the Laplace transform of $\pa{X_{s}-X_{t}}^{2}$ may be infinite in a neighborhood of 0. Under additional assumptions on the $X_{t}$, our result allows to recover (with worse constants) some deviation bounds based on concentration-type inequalities of $Z$ around its expectation. However our general result cannot be deduced from those inequalities. As we shall see, concentration-type inequalities could be false under the kind of assumptions we consider on the family $\pa{X_{t}}_{t\in T}$. 

The second part is oriented towards statistics. We consider the regression framework 
\begin{equation}\label{modreg0}
Y_{i}=f_{i}+\eps_{i},\ i=1,\ldots,n
\end{equation}
where $f=(f_{1},\ldots,f_{n})$ is an unknown vector of $\R^{n}$ and $\eps=(\eps_{1},\ldots,\eps_{n})$ is a random vector the components of which are independent, centered and admit suitable exponential moments. Our aim is to estimate $f$ from the observation of $Y=(Y_{1},\ldots,Y_{n})$ by mean of a model selection approach. More precisely, we start with a collection $\mathcal{S}=\ac{S_{m},\ m\in\M}$ of finite dimensional linear spaces $S_{m}$ to each of which we associate the least-squares estimator $\hat f_{m}\in  S_{m}$ of $f$. From the same data $Y$, our aim is to select some suitable estimator $\tilde f=\hat f_{\hat m}$ among the collection $\F=\ac{\hat f_{m},\ m\in\M}$ in such a way that the (squared) Euclidean risk of $\tilde f$ is as close as possible to the infimum of the risks over $\F$. The selection procedure we propose is based on the minimization of a penalized criterion 
the penalty of which is calibrated by using the deviation bounds established in the first part of this paper. More precisely, the penalty is obtained by studying the deviations of $\chi^{2}$-type random variables, that is, random variables of the form $\ab{\Pi_{S}\eps}_{2}^{2}$ where $\ab{\ }_{2}$ denotes the Euclidean norm and $\Pi_{S}$ the orthogonal projector onto a linear subspace $S$ of $\R^{n}$. To our knowledge, these deviation bounds in probability are new. We finally show that $\tilde f$ satisfies some oracle-type inequality under suitable assumptions on the metric structures of the $S_{m}$.

In the following sections, we situate the results of the present paper within the literature. 

\subsection{Controlling suprema of random processes}
Among the most common deviation inequalities, let us recall 
   
\begin{thm}[Bernstein's inequality]
Let $X_{1},\ldots,X_{n}$ be independent random variables and set $X=\sum_{i=1}^{n }\pa{X_{i}-\E(X_{i})}$. Assume that there exist nonnegative numbers  $v,c$ such that for all $k\ge 3$
\begin{equation}\label{bern-moment}
\sum_{i=1}^{n}\E\cro{\ab{X_{i}}^{k}}\le {k!\over 2}v^{2}c^{k-2},
\end{equation}
then for all $u\ge 0$
\begin{equation}\label{bern}
\P\pa{X \ge \sqrt{2v^{2}u}+cu}\le e^{-u}.
\end{equation}
Besides, for all $x\ge 0$, 
\begin{equation}\label{bern01}
\P\pa{X\ge x}\le \exp\pa{-{x^{2}\over 2(v^{2}+cx)}}.
\end{equation}
\end{thm}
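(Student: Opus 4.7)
The plan is to use the Cram\'er--Chernoff (exponential Markov) method: by independence, for every $\lambda \in (0, 1/c)$,
\[
\P(X \ge x) \le e^{-\lambda x}\prod_{i=1}^n \E\!\left[e^{\lambda(X_i - \E X_i)}\right],
\]
so the proof reduces to (i) bounding the log-Laplace transform of $X$ uniformly in $\lambda \in [0, 1/c)$, and then (ii) optimizing in $\lambda$.

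For step (i), I would expand each factor as
\[
\E\!\left[e^{\lambda(X_i - \E X_i)}\right] = 1 + \frac{\lambda^2 \var(X_i)}{2} + \sum_{k \ge 3}\frac{\lambda^k \E\!\left[(X_i - \E X_i)^k\right]}{k!}.
\]
Using $|X_i - \E X_i| \le |X_i| + \E|X_i|$ together with Jensen's inequality, the $k$-th centered moment is controlled (up to a universal factor) by $\E[|X_i|^k]$, so the sum over $i$ of the $k$-th terms can be bounded using hypothesis \eqref{bern-moment}. The contribution of the tail $k \ge 3$ then collapses to a geometric series in $\lambda c$. Combining with the quadratic contribution $\sum_i \var(X_i) \le v^2$ (which one reads as an implicit part of the Bernstein-type hypothesis) and applying $\log(1+y) \le y$ to each factor before summing, I would obtain
\[
\log \E[e^{\lambda X}] \le \frac{\lambda^2 v^2}{2(1-\lambda c)}, \qquad \lambda \in [0, 1/c).
\]

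For step (ii), the Chernoff bound becomes
\[
\P(X \ge x) \le \exp\!\left(-\lambda x + \frac{\lambda^2 v^2}{2(1-\lambda c)}\right).
\]
To derive \eqref{bern01}, I would substitute $\lambda = x/(v^2 + cx) \in [0, 1/c)$; since $1 - \lambda c = v^2/(v^2 + cx)$, the exponent simplifies directly to $-x^2/(2(v^2+cx))$. To derive \eqref{bern}, set $s = \sqrt{2u/v^2}$ and take $\lambda = s/(1+cs) \in [0, 1/c)$; using $\sqrt{2v^2 u} = v^2 s$, a short calculation shows that the exponent equals exactly $-u$ at $x = \sqrt{2v^2 u} + cu$.

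The main obstacle is step (i): translating the hypothesis on the moments of $|X_i|$ into a clean bound on the centered Laplace transform requires care with constants, and one must either assume $\sum_i \var(X_i) \le v^2$ directly or absorb it into the constants. Once the Laplace transform bound is in place, the two tail bounds in the conclusion follow from elementary algebraic substitutions, which is the standard dual formulation of Bernstein's inequality.
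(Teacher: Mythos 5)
First, a point of reference: the paper states this theorem without proof (it is a recalled classical result), so there is no internal argument to compare yours against; I am judging the proposal on its own terms. Your overall route --- Chernoff bound, a uniform bound $\log\E[e^{\lambda X}]\le \lambda^{2}v^{2}/(2(1-\lambda c))$ for $\lambda\in[0,1/c)$, then two algebraic substitutions --- is the standard and correct one, and your step (ii) is exact: the choice $\lambda=x/(v^{2}+cx)$ does yield \eref{bern01}, and the choice $\lambda=s/(1+cs)$ with $s=\sqrt{2u/v^{2}}$ does make the exponent equal to $-u$ at $x=\sqrt{2v^{2}u}+cu$, yielding \eref{bern}. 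You are also right to flag that the hypothesis as written (only $k\ge 3$) cannot control the variance, so the case $k=2$, i.e.\ $\sum_{i}\E[X_{i}^{2}]\le v^{2}$, must be read into the assumption --- as the paper itself does later in \eref{moment}.

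The genuine gap is in step (i). Bounding $\E\ab{X_{i}-\E X_{i}}^{k}$ by $\E\ab{X_{i}}^{k}$ ``up to a universal factor'' is not available: via $\ab{X_{i}-\E X_{i}}\le \ab{X_{i}}+\E\ab{X_{i}}$ and Jensen one only gets $\E\ab{X_{i}-\E X_{i}}^{k}\le 2^{k}\E\ab{X_{i}}^{k}$, and the factor $2^{k}$ propagates into the geometric series, effectively replacing $c$ by $2c$ (and degrading $v^{2}$). That does not prove the theorem as stated, whose conclusion carries the sharp constants $\sqrt{2v^{2}u}+cu$. The standard repair is to avoid expanding the centered variable altogether: write $\E[e^{\lambda(X_{i}-\E X_{i})}]=e^{-\lambda\E X_{i}}\,\E[e^{\lambda X_{i}}]$, use the elementary inequality $e^{y}\le 1+y+\sum_{k\ge 2}\ab{y}^{k}/k!$ (valid for all real $y$, since it reduces to $\sinh y\le y$ for $y<0$) to get $\E[e^{\lambda X_{i}}]\le 1+\lambda\E X_{i}+\sum_{k\ge 2}\lambda^{k}\E\ab{X_{i}}^{k}/k!$, and then $1+y\le e^{y}$ to conclude $\log\E[e^{\lambda(X_{i}-\E X_{i})}]\le \sum_{k\ge 2}\lambda^{k}\E\ab{X_{i}}^{k}/k!$. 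Summing over $i$ and invoking \eref{bern-moment} together with the $k=2$ case gives exactly $\log\E[e^{\lambda X}]\le \lambda^{2}v^{2}/(2(1-\lambda c))$ with no loss of constants, after which your step (ii) finishes the proof.
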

In the literature, \eref{bern-moment} together with the fact that the $X_{i}$ are independent is sometime replaced by the weaker condition 
\begin{equation}\label{exp}
\E\pa{e^{\lambda X}}\le \exp\cro{{\lambda^{2}v^{2}\over 2(1-\lambda c)}},\ \ \ \ \forall \lambda\in (0,1/c)
\end{equation}
with the convention $1/0=+\infty$. Bernstein's inequality allows to derive deviation inequalities for a large class of distributions among which the Poisson, Laplace, Gamma or the Gaussian distributions (once suitably centered). In this latter case,~\eref{exp} holds with $c=0$.  Another situation  of interest is the case where the $X_{i}$ are i.i.d. with values in $[-c,c]$. Then~\eref{bern-moment} and~\eref{exp} hold with $v^{2}=\var(X_{1})$. 

In the recent years, many efforts have been done to extend these bounds to the deviations of suprema $Z$ of random variables $X_{t}$. When $T$ is a (countable) bounded subset of a metric space $(\X,d)$, a common technique is to use a {\it chaining device}. This approach seems to go back to Kolmogorov and was very popular in statistics in the 90s to control suprema of empirical processes with regard to the entropy of $T$, see van de Geer~\citeyearpar{Geer90} for example. However, this approach leads to pessimistic numerical constants that are in general too large to be used in statistical procedures. An alternative to chaining is the use of concentration inequalities. For example, when the $X_{t}$ are Gaussian, for all $u\ge 0$ we have 
\begin{equation}\label{gaussien}
\P\pa{Z\ge \E\pa{Z}+\sqrt{2v^{2}u}}\le e^{-u}\ \ \ {\rm where}\ \ \ v^{2}=\sup_{t\in T}{\rm var}(X_{t}).
\end{equation}
This inequality is due to Sudakov \& Cirel'son~\citeyearpar{MR0365680}. Compared to chaining,~\eref{gaussien} provides a powerful tool for controlling suprema of Gaussian processes as soon as one is able to  evaluate $\E(Z)$ sharply enough.  

It is the merit of  Talagrand~\citeyearpar{MR1361756} to extend this approach for the purpose of controlling suprema of bounded empirical processes, that is, for 
$X_{t}$ of the form $X_{t}=\sum_{i=1}^{n}t(\xi_{i})-\E\pa{t(\xi_{i})}$ where $\eps_{1},\ldots,\eps_{n}$ are independent random variables  and $T$ a set of uniformly bounded functions, say with values in $[-c,c]$. From Talagrand's inequality, one can deduce deviation bounds with respect to $\E(Z)$ of the form 
\begin{equation}\label{bern0}
\P\cro{Z\ge C\pa{\E(Z)+ \sqrt{v^{2}u}+cu}}\le \exp\pa{-u}\ \ \mbox{for all}\ u\ge 0
\end{equation}
where $v^{2}=\sup_{t\in T}{\rm var}\pa{X_{t}}$ and $C$ is a positive numerical constant. Apart from the constants,~\eref{bern0} and~\eref{bern} have a similar flavor even though the boundness assumption on the elements of $T$ seems too strong compared to conditions~\eref{bern-moment} or~\eref{exp}. 

As the original result by Talagrand involved suboptimal numerical constants, many efforts were made to recover it with sharper ones. A first step in this direction is due to Ledoux~\citeyearpar{MR1399224} by mean of nice entropy and tensorisation arguments. Then, further refinements were made on Ledoux's result by Massart~\citeyearpar{MR1782276}, Rio~\citeyearpar{MR1955352} and Bousquet~\citeyearpar{MR1890640}, the latter author achieving the best possible result in terms of constants. For a nice introduction to these inequalities (and their applications to statistics) we refer the reader to the book by Massart~\citeyearpar{MR2319879}. Other improvements upon~\eref{bern0} have been done in the recent years. In particular Klein \& Rio~\citeyearpar{MR2135312} generalized the result to the case
\begin{equation}\label{defXti}
X_{t}=\sum_{i=1}^{n}\overline{X}_{i,t}
\end{equation}
where for each $t\in T$, $\pa{\overline{X}_{i,t}}_{i=1,\ldots,n}$ are independent (but not necessarily i.i.d.) centered random with values in $[-c,c]$. 

In the present paper, the result we establish holds under different assumptions than the ones leading to inequalities such as~\eref{bern0}. First, as pointed out by Jonas Kahn, an inequality such as~\eref{bern0} could be false under the kind of assumptions we consider on the family $\pa{X_{t}}_{t\in T}$. In the counter-example we give in Section~\ref{sect:I} (it is a slight modification of the one Jonas Kahn gave to us), we see that $Z$ may deviate from $\E(Z)$ on a set the probability of which may not be exponentially small. Moreover,  even in the more common situation where $X_{t}$ is of the form~\eref{defXti}, we establish deviation inequalities that are available for possibly unbounded random variables $\overline{X}_{i,t}$ which is beyond the scope of the concentration inequalities proven in Bousquet~\citeyearpar{MR1890640} and Klein \& Rio~\citeyearpar{MR2135312}. 

Even though it was originally introduced to bound $\E(Z)$ from above, {\it generic chaining} as described in Talagrand's book~\citeyearpar{MR2133757} provides another way of establishing deviation bounds for $Z$. Talagrand's approach relies on the idea of decomposing $T$ into partitions rather than into nets as it was usually done before with the classical chaining device. Denoting by $e_{1},\ldots,e_{k}$ the canonical basis of $\R^{k}$ and $\eps^{(1)},\ldots,\eps^{(k)}$ i.i.d. random vectors of $\R^{n}$ with common distribution $\mu$, generic chaining was used in Mendelson {\it et al}~\citeyearpar{MR2373017}
and Mendelson~\citeyearpar{MR2368981} to study the properties of the random operator $\Gamma: t\mapsto k^{-1/2}\sum_{i=1}^{k}\<\eps^{(i)},t\>e_{i}$ defined for $t$ in the unit sphere $T$ of $\R^{n}$ (which we endow with its usual scalar product $\<.,.\>$). Their results rely on the control of suprema of random variables of the form $X_{t}=k^{-1}\sum_{i=1}^{k}\<\eps^{(i)},t\>$ for $t\in T$. When $k=1$, this form of $X_{t}$ is analogous to that we consider in our statistical application. However, the deviation bounds obtained in Mendelson {\it et al}~\citeyearpar{MR2373017} 
and Mendelson~\citeyearpar{MR2368981} require that $\mu$ be subgaussian which we do not want to assume here. Closer to our result is Theorem~3.3 in Klartag \& Mendelson~\citeyearpar{MR2149924} which bounds on a set of probability at least $1-\delta$ (for some $\delta\in (0,1)$) the supremum $Z=\sup_{t\in T}\ab{X_{t}}$. Unfortunately, their bound involves non-explicit constants (that depend on $\delta$) which makes it useless for statistical issues.  

Our approach also uses generic chaining. With such a technique, the inequalities we get suffer from the usual drawback that the numerical constants are non-optimal but at least allow a suitable control of the $\chi^{2}$-type random variables we consider in the statistical part of this paper. To our knowledge, these inequalities are new. 

\subsection{From the control of $\chi^{2}$-type random variables to model selection in regression} 
The reason why $\chi^{2}$-type random variables naturally emerge in the regression setting is the following one. Let $S$ be a linear subspace of $\R^{n}$. The classical least-squares estimator of $f$ in $S$ is given by $\hat f=\Pi_{S}Y=\Pi_{S}f+\Pi_{S}\xi$ and since the Euclidean (squared) distance beween $f$ and $\hat f$ decomposes as 
\[
\ab{f-\hat f}_{2}^{2}=\ab{f-\Pi_{S}f}_{2}^{2}+\ab{\Pi_{S}\xi}_{2}^{2}
\]
the study of the quadratic loss $\ab{f-\hat f}_{2}^{2}$ requires that of its random component $\ab{\Pi_{S}\xi}_{2}^{2}$. This quantity is called a $\chi^{2}$-type random variable by analogy to the Gaussian case. Its study is connected to that of suprema of random variables by the formula
\begin{equation}\label{x2}
\ab{\Pi_{S}\xi}_{2}=\sup_{t\in T}X_{t}=Z\ \ \mbox{with}\ \ X_{t}=\sum_{i=1}^{n}\xi_{i}t_{i}
\end{equation}
where $T$ is the unit ball of $S$ (or a countable and dense subset of it). The control of such random variables is at the heart of the model selection scheme. When $\eps$ is a standard Gaussian vector of $\R^{n}$,  Birg\'e \& Massart~\citeyearpar{MR1848946} used~\eref{gaussien} to control the probability of deviation of $\ab{\Pi_{S}\xi}_{2}$ with respect to its expectation. The strong integrability properties of the $\xi_{i}$ allows to handle very general collections of models. By using chaining techniques, these results were extended to the subgaussian case (that is for $\pm\xi_{i}$ satisfying~\eref{exp} with $c=0$ for all $i$) in Baraud, Comte \& Viennet~\citeyearpar{MR1845321}. Similarly, very few assumptions were required on the collection to perform model selection. Baraud~\citeyearpar{MR1777129} considered the case where the $\xi_{i}$ only admit few finite moments. There, the weak integrability properties of the $\eps_{i}$ induced severe restrictions on the collection of models ${\mathcal S}$. Typically, for all $D\in\ac{1,\ldots, n}$ the number of models $S_{m}$ of a given dimension $D$ had to be at most polynomial with respect to $D$, the degree of the polynomial depending on the number of finite moments of $\eps_{1}$.  

To our knowledge, the intermediate case where the random variables $\pm\eps_{i}$ admit exponential moments of the form~\eref{exp} for all $i$ (with $c\neq 0$ to exclude the already known subgaussian case) has remained open for general collections of models. In this context, the concentration-type inequality obtained in Klein \& Rio~\citeyearpar{MR2135312} cannot be used to control $\ab{\Pi_{S}\xi}_{2}$ as it would require that the $\eps_{i}$ be bounded. An attempt at relaxing this boundedness assumption on the $\eps_{i}$ can be found in Bousquet~\citeyearpar{MR2073435}. There, the author considered the situation where $T$ is a subset of $[-1,1]^{n}$ and the $\xi_{i}$ independent and centered random variables satisfying
\begin{equation}\label{moment}
\E\cro{\ab{\xi_{i}}^{k}}\le {k!\over 2}\sigma^{2}c^{k-2},\ \ \forall \ k\ge 2.
\end{equation}
Note that~\eref{moment} implies~\eref{exp} with $v^{2}=v^{2}(t)=\ab{t}_{2}^{2}\sigma^{2}$. The result by Bousquet provides an analogue of~\eref{bern0} with $v^{2}$ replaced by $n\sigma^{2}$ although one would expect the smaller (and usual) quantity $v^{2}=\sup_{t\in T}v^{2}(t)$. Because of this, the resulting inequality turns out to be useless at least for the statistical application we have in mind. This fact has already been pointed out by Marie Sauv\'e in Sauv\'e~\citeyearpar{MSauve}. Sauv\'e also tackled the problem of model selection when the $\eps_{i}$ satisfy~\eref{moment}. Compared to Baraud~\citeyearpar{MR1777129}, her condition on the collection of models is weaker in the sense that the number of models with a given dimension $D$ is allowed to be exponentially large with respect to $D$. However, the collection she considered only consists of linear spaces $S_{m}$ with a specific form (leading to regressogram estimators). Besides, her selection procedure was relying on a known upper bound on $\max_{i=1,\ldots,n}\ab{f_{i}}$ which can be unrealistic in practice. Unlike Marie Sauv\'e's, our procedure does not depend on such an upper bound and allows for more general linear spaces $S_{m}$.    

\subsection{Organisation of the paper and main notations}
The paper is organized as follows. We present our deviation bound for $Z$ in Section~\ref{sect:I}. The statistical application is developed in Sections~\ref{sect:stats} and~\ref{sec:unifions}. In Section~\ref{sect:stats} we consider particular cases of collections  $\mathcal{S}$ of interest, the general case being considered in Section~\ref{sec:unifions}. Section~\ref{Proof} is devoted to the proofs. 

Along the paper we assume that $n\ge 2$ and use the following notations. We denote by $e_{1},\ldots,e_{n}$ the canonical basis of $\R^{n}$ which we endow with the Euclidean inner product denoted $\<.,.\>$. For $x\in\R^{n}$, we set $|x|_{2}=\sqrt{\<x,x\>}$, $|x|_{1}=\sum_{i=1}^{n}|x_{i}|$ and $|x|_{\infty}=\max_{i=1,\ldots,n}|x_{i}|$. The linear span of a family $u_{1},\ldots,u_{k}$ of vectors is denoted by ${\rm Span\!}\ac{u_{1},\ldots,u_{k}}$. The quantity $|I|$ is the cardinality of a finite set $I$.  Finally, $\kappa$ denotes the numerical constant $18$. It appears first in the control of the deviation of $Z$ when applying Talagrand's chaining argument and then all along the paper. It seemed interesting to stress up the influence of this constant in the model selection procedure we propose. 

\section{A Talagrand-type Chaining argument for controlling suprema of random variables}\label{sect:I}

Let $\pa{X_{t}}_{t\in T}$ be a family of real valued and centered random variables indexed by a countable and nonempty set $T$.  Fix some $t_{0}$ in $T$ and set
\[
Z=\sup_{t\in T}\pa{X_{t}-X_{t_{0}}}\ \ \ {\rm and}\ \ \ \overline{Z}=\sup_{t\in T}\ab{X_{t}-X_{t_{0}}}.
\]
Our aim is to give a probabilistic control of the deviations of $Z$ (and $\overline Z$). We make the following assumptions
\begin{Ass}\label{momexp}
There exist two distances $d$ and $\delta$ on $T$ and a nonnegative constant $c$ such that for all $s,t\in T$ ($s\ne t$) 
\begin{equation}\label{debase}
\E\cro{e^{\lambda(X_{t}-X_{s})}}\le \exp\cro{{\lambda^{2}d^{2}(s,t)\over 2(1-\lambda c\delta(s,t))}},\ \ \forall \lambda\in\left[0, {1\over c\delta(s,t)}\right)
\end{equation}
with the convention $1/0=+\infty$.
\end{Ass}
Note that $c=0$ corresponds to the particular situation where the increments of the process $X_{t}$ are {\it subgaussian}. 

Besides Assumption~\ref{momexp}, we also assume in this section that $d$ and $\delta$ derive from norms. This is the only case we need to consider to handle the statistical problem described in Section~\ref{sect:stats}. Nevertheless, a more general result with arbitrary distances can be found in Section~\ref{Proof}. 

\begin{Ass}\label{lineaire}
Let $S$ be a linear space with finite dimension $D$ endowed with two arbitrary norms denoted $\|\ \|_{2}$ and $\|\ \|_{\infty}$ respectively. Define for $s,t\in S$, $d(s,t)=\|t-s \|_{2}$ and $\delta(s,t)=\|s-t \|_{\infty}$ and assume that for constants $v>0$ and $c\ge 0$,
\[
T\subset \ac{t\in S\ \telque\ \|t-t_{0}\|_{2}\le v,\ \ c\|t-t_{0}\|_{\infty}\le b}.
\]
\end{Ass}

Then, the following result holds. 
\begin{thm}\label{norm}
Under Assumptions~\ref{momexp} and~\ref{lineaire}, 
\begin{equation}\label{svanorm}
\P\cro{Z\ge \kappa\pa{\sqrt{v^{2}(D+x)}+b(D+x)}}\le e^{-x},\ \ \forall x\ge 0
\end{equation}
with $\kappa=18$. Moreover 
\begin{equation}\label{vanorm}
\P\cro{\overline{Z}\ge \kappa\pa{\sqrt{v^{2}(D+x)}+b(D+x)}}\le 2e^{-x},\ \ \forall x\ge 0.
\end{equation}
\end{thm}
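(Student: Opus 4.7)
My plan is to apply Talagrand's generic chaining to the family $(X_t - X_{t_0})_{t\in T}$, exploiting the Bernstein condition (\ref{debase}) with both distances simultaneously. Since $S$ is a $D$-dimensional normed space, the standard volumetric estimate gives a covering of the ball $\{t\in S:\|t-t_0\|_2\le v\}$ with at most $(3v/\epsilon)^D$ points at scale $\epsilon$ in the $d$-metric, and a covering of $\{t\in S:c\|t-t_0\|_\infty\le b\}$ with at most $(3b/\epsilon)^D$ points at scale $\epsilon$ in the $c\delta$-metric. I would fix the geometric scale $2^{-j}$ and build a nested sequence $\{t_0\}=T_0\subset T_1\subset\dots\subset T$ in which $T_j$ is simultaneously a $v2^{-j}$-net of $T$ in $d$ and a $b2^{-j}$-net in $c\delta$, e.g.\ by taking the common refinement of two such nets. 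This gives $\log|T_j|\le AjD$ for some absolute constant $A$.

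For each $t\in T$, let $\pi_j(t)$ be a closest element of $T_j$ to $t$ and write the telescoping identity
\begin{equation*}
X_t-X_{t_0}=\sum_{j\ge 0}\bigl(X_{\pi_{j+1}(t)}-X_{\pi_j(t)}\bigr).
\end{equation*}
Assumption~\ref{momexp} gives a Bernstein bound on the Laplace transform of each link, and the Chernoff argument behind (\ref{bern}) yields: for any $j$ and any $u_j\ge 0$,
\begin{equation*}
X_{\pi_{j+1}(t)}-X_{\pi_j(t)}\le d(\pi_{j+1}(t),\pi_j(t))\sqrt{2u_j}+c\delta(\pi_{j+1}(t),\pi_j(t))\,u_j
\end{equation*}
with probability at least $1-e^{-u_j}$. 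There are at most $|T_j|\cdot|T_{j+1}|$ such pairs at level $j$, so I would take $u_j=x+A'Dj$ for an absolute constant $A'$ large enough to absorb the $\log(|T_j||T_{j+1}|)$ entropy factor by a union bound, so that the total exceptional set has probability at most $\sum_j e^{-x-Dj}\le e^{-x}$.

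Summing over $j$, the $d$-contribution is at most a constant times $\sum_j v 2^{-j}\sqrt{Dj+x}\le C_1 v(\sqrt{D}+\sqrt{x})$, and the $c\delta$-contribution is at most a constant times $\sum_j b 2^{-j}(Dj+x)\le C_2 b(D+x)$. Using $v\sqrt{D}+v\sqrt{x}\le\sqrt{2v^2(D+x)}$ and collecting constants would recover (\ref{svanorm}). For $\overline Z$, observe that $(-X_t)_{t\in T}$ also satisfies Assumption~\ref{momexp} with the same $(d,\delta,c)$, so (\ref{svanorm}) applies equally to $\sup_{t\in T}(X_{t_0}-X_t)$; since $\overline Z$ is the maximum of these two suprema, a union bound gives (\ref{vanorm}) with the extra factor $2$.

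The main obstacle will be tracking all numerical constants precisely enough to arrive at the explicit value $\kappa=18$: one must optimize the doubling factor in the scales, the constant $A'$ absorbing the entropy, and the coefficients appearing in the two geometric sums all at once, then verify that their interplay really fits within $18$. A secondary subtlety is that the $d$- and $c\delta$-diameters of $T$ need not be comparable, which is handled by constructing one sequence of nets that is simultaneously fine in both metrics; one has to check that this common refinement does not inflate $\log|T_j|$ beyond $O(jD)$, since otherwise the Bernstein linear-in-entropy term would pick up an extra logarithmic factor and destroy the bound.
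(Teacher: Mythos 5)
Your proposal follows essentially the same route as the paper: a generic-chaining decomposition along a nested sequence of sets that are simultaneously fine for $d$ and for $c\delta$ (the paper takes the common refinement of two sequences of partitions, with cardinalities controlled by the same volumetric covering bound, so the entropy only doubles), Bernstein's inequality on each link at level $x$ plus a term linear in $jD$, a union bound over the pairs at each level, and the $\pm X_t$ argument for $\overline Z$. The only slip is the claim $\sum_{j\ge 0}e^{-x-Dj}\le e^{-x}$, whose $j=0$ term alone already equals $e^{-x}$; the paper fixes this by adding $(j+1)\log 2$ inside each level's deviation, which perturbs the geometric sums by only a bounded amount and is absorbed into $\kappa=18$.
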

Since $S$ is separable, the result easily extends to the case where $T\subset S$ is not countable provided the paths $t\mapsto X_{t}$ are continuous with probability 1 (with respect to $\|\ \|_{2}$ or $\|\ \|_{\infty}$, both norms being equivalent on $S$). 

\subsection{Connections with deviations inequalities with respect to $\E(Z)$}
In this section we make some connections between our bound~\eref{svanorm} and inequalities~\eref{gaussien} and~\eref{bern0}. Along this section, $T$ is the unit ball of the linear span $S$ of an orthonormal system $\ac{u_{1},\ldots,u_{D}}$. Both norms $\ab{\ }_{2}$ and $\ab{\ }_{\infty}$ being equivalent on $S$, we set 
\[
\Lambda_{2}(S)=\sup_{t\in T\setminus\ac{0}}{\ab{t}_{\infty}\over \ab{t}_{2}}<+\infty.
\]
Note that $\Lambda_{2}(S)$ depends on the metric structure of $S$. In all cases, $\Lambda_{2}(S)\le 1$, this bound being achieved for $S={\rm Span}\ac{e_{1},\dots,e_{D}}$ for example. However, $\Lambda_{2}(S)$ can be much smaller, equal to $\sqrt{D/n}$ for example, when $n=kD$ for some positive integer $k$ and 
$u_{j}=\pa{e_{(j-1)k+1},\ldots,e_{jk}}/\sqrt{k}$ for $j=1,\ldots,D$. The set $T$ fulfills Assumption~\ref{lineaire} with $t_{0}=0$, $d(s,t)=\ab{s-t}_{2}$, $\delta(s,t)=\ab{s-t}_{\infty}$, $v=1$ and $b=c\Lambda_{2}(S)$. 
Let $\eps=(\eps_{1},\ldots,\eps_{n})$ be a random vector of $\R^{n}$ with i.i.d. components of common variance 1. We consider the process defined on $T$ by $X_{t}=\<t,\eps\>$ and note that in this case $Z=\sup_{t\in T}X_{t}=\ab{\Pi_{S}\eps}_{2}$. Besides, by using Jensen's inequality
\begin{equation}\label{esp}
\E\cro{Z}=\E\cro{\sqrt{\sum_{j=1}^{D}\<u_{j},\eps\>^{2}}}\le \sqrt{D}.
\end{equation} 

{\it The Gaussian case:}
Assume that the $\xi_{i}$ are standard Gaussian random variables. On the one hand, since $\sup_{t\in T}\var(X_{t})=1$ we deduce from Sudakov \& Cirel'son's bound~\eref{gaussien} together with~\eref{esp} 
\begin{equation}\label{cs1}
\P\pa{Z\ge \sqrt{D}+\sqrt{2x}}\le e^{-x},\ \forall x\ge 0.
\end{equation}
On the other hand, since~\eref{exp} holds with $c=0$, for all $s,t\in S$ and $\lambda\ge 0$
\begin{eqnarray*}
\E\cro{e^{\lambda(X_{t}-X_{s})}}&=&\prod_{i=1}^{n}\E\cro{e^{\lambda\eps_{i}\pa{t_{i}-s_{i}}}}\le \prod_{i=1}^{n}\exp\cro{{\lambda^{2}\ab{t_{i}-s_{i}}^{2}\over 2}}\\
&\le& \exp\cro{{\lambda^{2}\ab{t-s}_{2}^{2}\over 2}}.
\end{eqnarray*}
Consequently,~\eref{debase} holds with $c=0$ and one can apply Theorem~\ref{norm} to get
\begin{equation}\label{cs2}
\P\cro{Z\ge \kappa\pa{\sqrt{D}+\sqrt{x}}}\le \P\pa{Z\ge \kappa\sqrt{D+x}}\le e^{-x},\ \forall x\ge 0.
\end{equation}
Apart from the numerical constants, it turns out that~\eref{cs1} and~\eref{cs2} are similar in this case.
 
\vspace{2mm}
{\it The bounded case: }
Let us assume that the $\eps_{i}$ take their values in $[-a,a]$ for some $a\ge 1$. We can apply the bound given by Klein \& Rio~\citeyearpar{MR2135312} with $v=1$ and $c=a\Lambda_{2}(S)$ in~\eref{bern0} which together with~\eref{esp} gives for a suitable constant $C>0$, 
\begin{equation}\label{R1}
\P\cro{Z\ge C\pa{\sqrt{D}+\sqrt{x}+a\Lambda_{2}(S)x}}\le \exp\pa{-x}\ \ \mbox{for all}\ x\ge 0.
\end{equation}

When the $\xi_{i}$ are bounded, there are actually two ways of applying Theorem~\ref{norm}. One relies on the fact that the random variables $\pm \xi_{i}$ satisfy~\eref{exp} with $v=1$ and $c=a$ for all $i$. Hence, whatever $s,t\in S$ and $\lambda\le (a\ab{s-t}_{\infty})^{-1}$,
\begin{eqnarray*}
\E\cro{e^{\lambda(X_{t}-X_{s})}}&=&\prod_{i=1}^{n}\E\cro{e^{\lambda\eps_{i}\pa{t_{i}-s_{i}}}}\le \prod_{i=1}^{n}\exp\cro{{\lambda^{2}\ab{t_{i}-s_{i}}^{2}\over 2(1-\lambda a\ab{t-s}_{\infty})}}\\
&\le& \exp\cro{{\lambda^{2}\ab{t-s}_{2}^{2}\over 2\pa{1-\lambda a \ab{t-s}_{\infty}}}}
\end{eqnarray*}
and since Assumption~\ref{momexp} holds with $c=a$ and we get from Theorem~\ref{norm} 
\begin{equation}\label{B1}
\P\cro{Z\geq \kappa\pa{\sqrt{D}+\sqrt{x}+a\Lambda_{2}(S)x+a\Lambda_{2}(S)D}}\le e^{-x},\ \forall x\ge 0. 
\end{equation}
Inequalities~\eref{R1} and~\eref{B1} essentially differ by the fact that the latter involves the extra term $a\Lambda_{2}(S)D$. Hence, we recover~\eref{R1} only for those $S$ bearing some specific metric structure for which $\Lambda_{2}(S)\le C'(a\sqrt{D})^{-1}$ for some numerical constant $C'>0$. 

The other way of using Theorem~\ref{norm} is to note that the random variables $\pm\xi_{i}$ are subgaussian (because they are bounded) and therefore satisfy~\eref{exp} with $v=a$ and $c=0$. By arguing as in the Gaussian case, Assumption~\ref{momexp} holds with $d(s,t)=a\ab{s-t}_{2}$ for all $s,t\in S$, $c=0$ and Assumption~\ref{lineaire} is fulfilled with $v=a$ and $b=0$. We deduce from Theorem~\ref{norm}
\begin{equation}\label{B2}
\P\cro{Z\ge \kappa\pa{a\sqrt{D}+a\sqrt{x}}}\le e^{-x}\ \ \forall x\ge 0.
\end{equation}
Note that whenever $a$ is not too large compared to 1, this bound improves~\eref{R1} by avoiding the linear term $a\Lambda_{2}(S)x$. 

\subsection{ A counter-example}\label{jonas}
In this section we show that for the supremum $Z$ of a random process $\mathbf{X}=\pa{X_{t}}_{t\in T}$ satisfying~\eref{debase}  may not concentrate around $\E(Z)$. 
More precisely, let us show that~\eref{bern0} could be false under~~\eref{debase}. A simple counter-example is the following one. For $D\ge 1$, let $S={\rm Span}\ac{e_{1},\ldots,e_{D}}$, $T$ be the unit ball of $S$ and $\mathbf{X'}=\pa{X'_{t}}_{t\in T}$ the Gaussian process defined for $t\in T$ by $t\mapsto \<t, \eps\>$ where $\eps$ is a standard Gaussian vector of $\R^{n}$. For $p\in (0,1)$, define $\mathbf{X}$ as either $\mathbf{X'}$ with probability $p$ or the process $\mathbf{X''}$ identically equal to 0 with probability $1-p$. On the one hand, note that both processes $\mathbf{X'}$ and $\mathbf{X''}$ satisfy~\eref{debase} with $c=0$, $d(s,t)=\ab{s-t}_{2}$ for all $s,t\in S$ and therefore so does $\mathbf{X}$ (whatever $p$). On the other hand, since
\[
\E(Z)=p\E\cro{\sup_{t\in T}X_{t}'}=p\E\cro{\sqrt{\sum_{i=1}^{D}\eps_{i}^{2}}}\le p\sqrt{D}
\]
and $\sup_{t\in T}\var(X_{t})\le 1$,~\eref{bern0} would imply that for some positive numerical constant $C$ (that we can take larger than 1 with no loss of generality) whatever $p\in(0,1)$ and $u\ge 0$, 
\begin{eqnarray*}
\P\cro{Z\ge Cp\sqrt{D}+C\pa{\sqrt{u}+u}}&=&p\P\cro{\sqrt{\sum_{i=1}^{D}\eps_{i}^{2}}\ge Cp\sqrt{D}+C\pa{\sqrt{u}+u}}\\
&\le& e^{-u}.
\end{eqnarray*}
In particular, by taking $p=(2C)^{-1}\in (0,1)$ and $u=\log(2/p)$, we would get
\[
\P\cro{\sqrt{{1\over D}\sum_{i=1}^{D}\eps_{i}^{2}}\ge {1\over 2}+{C\over \sqrt{D}}\pa{\sqrt{\log(2/p)}+\log(2/p)}}\le {1\over 2}
\]
which is of course false by the law of large numbers for large values of $D$. 

\section{Applications to model selection in regression}\label{sect:stats}
Consider the regression framework given by~\eref{modreg0} and assume that for some known nonnegative numbers $\sigma$ and $c$ 
\begin{equation}\label{bernstein}
\log\E\cro{e^{\lambda \eps_{i}}}\le {\lambda^{2}\sigma^{2}\over 2(1-|\lambda| c)}\ \ \mbox{for all}\ \ \lambda\in(-1/c,1/c)\ \mbox{and}\ i=1,\ldots,n.
\end{equation}
Inequality~\eref{bernstein} holds for a large class of distributions (once suitably centered) including Gaussian, Poisson, Laplace or Gamma (among others). Besides,~\eref{bernstein} is fulfilled when the $\xi_{i}$ satisfy~\eref{moment} and therefore whenever these are bounded. 

Our estimation strategy is based on model selection. We start with a (possibly large) collection $\ac{S_{m},\ m\in\M}$ of linear subspaces (models) of $\R^{n}$ and associate to each of these the least-squares estimators $\hat f_{m}=\Pi_{S_{m}}Y$. Given a penalty function $\pen$ from $\M$ to $\R_{+}$, we define the penalized criterion ${\rm crit}(.)$ on  $\M$ by
\begin{equation}\label{crit}
{\rm crit}(m)=\ab{Y-\hat f_{m}}_{2}^{2}+\pen(m).
\end{equation}
In this section, we propose to establish risk bounds for the estimator of $f$ given by $\hat f_{\hat m}$ where the index $\hat m$ is selected from the data among $\M$ as any minimizer of  ${\rm crit}(.)$. 

In the sequel, the penalty $\pen$ will be based on some {\it a priori} choice of nonnegative numbers $\ac{\Delta_{m},\ m\in\M}$ for which we set 
\[
\Sigma=\sum_{m\in\M} e^{-\Delta_{m}}<+\infty.
\]
When $\Sigma=1$, the choice of the $\Delta_{m}$ can be viewed as that of a prior distribution on the models $S_{m}$. For related conditions and their interpretation, see Barron and Cover~\citeyearpar{MR1111806} or  Barron {\it et al}~\citeyearpar{MR1679028}.

In the following sections, we present some applications of our main result  (to be presented in Subsection~\ref{sect:main}) for some collections of linear spaces $\ac{S_{m},\ m\in\M}$ of interest.

\subsection{Selecting among histogram-type estimators}\label{sect100}
For a partition $m$ of $\ac{1,\ldots,n}$, $S_{m}$ denotes  the linear span of vectors of $\R^{n}$ the coordinates of which are constants on each element $I$ of $m$. In the sequel, we shall restrict to partitions $m$ the elements of which consist of consecutive integers. 

Consider a partition  $\mathfrak{m}$ of $\ac{1,\ldots,n}$ and $\M$ a collection of partitions $m$ such that $S_{m}\subset S_{\mathfrak{m}}$. We obtain the following result.

\begin{prop}\label{histo}
Let $a,b>0$. Assume that
\begin{equation}\label{condhisto}
|I|\ge a^{2}\log^{2}n,\ \ \forall I\in \mathfrak{m}.
\end{equation}
If for some $K>1$, 
\begin{equation}\label{p1}
\pen(m)\ge K\kappa^{2}\pa{\sigma^{2}+2c{(\sigma+c)(b+2)\over a\kappa}}\pa{|m|+\Delta_{m}},\ \ \forall m\in\M.
\end{equation}
the estimator $\hat f_{\hat m}$ satisfies 
\begin{equation}\label{inehisto}
\E\pa{\ab{f-\hat f_{\hat m}}_{2}^{2}}\le C(K)\cro{\inf_{m\in\M}\cro{\E\pa{\ab{f-\hat f_{m}}_{2}^{2}}+\pen(m)}+R}
\end{equation}
where  $C(K)$ is given by~\eref{CK} and
\begin{equation*}\label{R(c,L)}
R=\kappa^{2}\pa{\sigma^{2}+2c{(c+\sigma)(b+2)\over a\kappa}}\Sigma+2{(c+\sigma)^{2}(b+2)^{2}\over a^{2}n^{b}}.
\end{equation*}
\end{prop}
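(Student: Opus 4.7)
The plan follows the standard Birg\'e--Massart model-selection dissection, with Theorem~\ref{norm} providing the probabilistic control.

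\emph{Step 1 (deterministic reduction).} Fix $m\in\M$. From the defining inequality $|Y-\hat f_{\hat m}|_2^2+\pen(\hat m)\le |Y-\hat f_m|_2^2+\pen(m)$ and $Y=f+\xi$, a routine rearrangement gives
$$|f-\hat f_{\hat m}|_2^2\le |f-\hat f_m|_2^2+2\<\xi,\hat f_{\hat m}-\hat f_m\>+\pen(m)-\pen(\hat m).$$
Since $\hat f_{\hat m}-\hat f_m\in S_{\hat m}+S_m$, the inner product is bounded by $|\Pi_{S_{\hat m}+S_m}\xi|_2|\hat f_{\hat m}-\hat f_m|_2$. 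Combining $2ab\le\theta^{-1}a^2+\theta b^2$ (for some $\theta\in(0,1/2)$ to be tuned) with $|\hat f_{\hat m}-\hat f_m|_2^2\le 2|f-\hat f_{\hat m}|_2^2+2|f-\hat f_m|_2^2$ yields the pivotal inequality
$$(1-2\theta)|f-\hat f_{\hat m}|_2^2\le (1+2\theta)|f-\hat f_m|_2^2+\pen(m)-\pen(\hat m)+\theta^{-1}|\Pi_{S_{\hat m}+S_m}\xi|_2^2.$$

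\emph{Step 2 (deviation of the projected noise via Theorem~\ref{norm}).} For any linear subspace $S\subset S_\mathfrak m$ of dimension $D$, independence of the $\xi_i$ and~\eref{bernstein} yield
$$\log\E\cro{e^{\lambda\<\xi,t-s\>}}\le\frac{\lambda^2\sigma^2|t-s|_2^2}{2(1-|\lambda|c|t-s|_\infty)}\quad(|\lambda|c|t-s|_\infty<1),$$
so Assumption~\ref{momexp} holds with $d(s,t)=\sigma|t-s|_2$ and $\delta(s,t)=|t-s|_\infty$. With $T$ taken as the unit ball of $S$, Assumption~\ref{lineaire} is met with parameters $\sigma$ and $c\Lambda_\infty(S)$, where $\Lambda_\infty(S):=\sup_{t\in S\setminus\{0\}}|t|_\infty/|t|_2$. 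The orthonormal basis $\{\1_I/\sqrt{|I|}\}_{I\in\mathfrak m}$ of $S_\mathfrak m$ and~\eref{condhisto} give $\Lambda_\infty(S)\le(\min_{I\in\mathfrak m}|I|)^{-1/2}\le 1/(a\log n)$, so Theorem~\ref{norm} provides
$$\P\cro{|\Pi_S\xi|_2\ge \kappa\sigma\sqrt{D+x}+\tfrac{\kappa c}{a\log n}(D+x)}\le e^{-x},\quad x\ge 0.$$

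\emph{Step 3 (penalty calibration; the main obstacle; conclusion).} The delicate point is to absorb this Bernstein-type tail into the linear penalty~\eref{p1}. Apply Step~2 to $S=S_m+S_{m'}$ (of dimension at most $|m|+|m'|$) with $x=\Delta_m+\Delta_{m'}+u$; squaring and using $(p+q)^2\le 2p^2+2q^2$ splits the tail of $\theta^{-1}|\Pi_S\xi|_2^2$ into a linear piece of size $\asymp\theta^{-1}\kappa^2\sigma^2(|m|+|m'|+\Delta_m+\Delta_{m'}+u)$, absorbed by the $K\kappa^2\sigma^2$ part of $\pen(m)+\pen(m')$ once $K/\theta$ is large enough, and a quadratic piece of size $\asymp\theta^{-1}\kappa^2c^2(a\log n)^{-2}(|m|+|m'|+\Delta_m+\Delta_{m'}+u)^2$. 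The quadratic piece is the crux; I would handle it by splitting the integration in $u$ at the threshold $u_\star=(b+2)\log n$. On $\{u\le u_\star\}$ one of the two factors of the quadratic is bounded by a constant of order $(b+2)\log n$, turning the quadratic into an additional linear term whose coefficient $\asymp c^2(b+2)/(a^2\log n)$ is absorbed by the $2Kc(c+\sigma)(b+2)/(a\kappa)$ correction built into~\eref{p1}. On $\{u>u_\star\}$ the factor $e^{-u}\le n^{-(b+2)}$ outweighs every polynomial factor in $n$ coming from $(D+x)^2$, and explicit integration of $u^2e^{-u}$ produces the residual $2(c+\sigma)^2(b+2)^2/(a^2n^b)$ in $R$. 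Summing the resulting bound on $\E[(\theta^{-1}|\Pi_{S_{\hat m}+S_{m'}}\xi|_2^2-\pen(\hat m)-\pen(m'))_+]$ over $m'\in\M$ against the weights $e^{-\Delta_{m'}}$, taking expectations in Step~1 with $\E|f-\hat f_m|_2^2=|f-\Pi_{S_m}f|_2^2+\E|\Pi_{S_m}\xi|_2^2$, taking the infimum over $m$, and dividing by $1-2\theta$ delivers~\eref{inehisto} with $C(K)=(1+2\theta)/(1-2\theta)$ for the value $\theta=\theta(K)\in(0,1/2)$ chosen so that $\theta^{-1}$ bridges the gap between $K$ and $1$ in the penalty coefficient.
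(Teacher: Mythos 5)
Steps 1 and 2 are sound and consistent with what the paper does (Step 1 is the dissection used in the proof of Theorem~\ref{selmod}; Step 2 correctly verifies Assumption~\ref{momexp} and the bound $\sup_{t\in S\setminus\ac{0}}\ab{t}_{\infty}/\ab{t}_{2}\le (a\log n)^{-1}$ for $S\subset S_{\mathfrak m}$). The gap is in Step 3, and it is fatal as written. After squaring, your quadratic piece is
\[
\frac{\theta^{-1}\kappa^{2}c^{2}}{a^{2}\log^{2}n}\pa{|m|+|m'|+\Delta_{m}+\Delta_{m'}+u}^{2},
\]
and you claim that on $\ac{u\le u_{\star}}$ ``one of the two factors is bounded by a constant of order $(b+2)\log n$''. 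That is false: the factor contains $D=\dim(S_{m}+S_{m'})$ and the weights, and $D$ can be as large as $|\mathfrak m|$, i.e.\ of order $n/(a^{2}\log^{2}n)$ under \eref{condhisto}, while the $\Delta$'s are only constrained by $\sum_{m}e^{-\Delta_{m}}<\infty$. For such models the quadratic piece is of order $c^{2}n^{2}/\log^{6}n$, whereas any penalty of the form \eref{p1} is linear in $|m|+\Delta_{m}$ and hence of order $n/\log^{2}n$; no choice of $K$ or $\theta$ absorbs the former into the latter. In short, a direct application of Theorem~\ref{norm} to the whole unit ball of $S_{m}+S_{m'}$ (with $b=c\,(a\log n)^{-1}$) is intrinsically too weak: the linear-in-$(D+x)$ term in \eref{svanorm} becomes quadratic after squaring and cannot be repaired by splitting the integration in $u$.

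The paper avoids this by a self-improvement argument (Theorem~\ref{chi}): one first intersects with the event $\ac{\ab{\Pi_{S}\xi}_{\infty}\le u}$ where $u$ is a \emph{constant} (here $u=(c+\sigma)(b+2)/a$, independent of $D$). On $\ac{\ab{\Pi_{S}\xi}_{2}\ge z,\ \ab{\Pi_{S}\xi}_{\infty}\le u}$ the supremum over the unit ball is attained at $\hat t=\Pi_{S}\xi/\ab{\Pi_{S}\xi}_{2}$, which satisfies $\ab{\hat t}_{\infty}\le u/z$; so Theorem~\ref{norm} may be applied with $b=cu/z$ rather than $b=c(a\log n)^{-1}$, and solving the resulting fixed-point inequality gives a bound on $\ab{\Pi_{S}\xi}_{2}^{2}$ that is \emph{linear} in $D+x$ with coefficient $\kappa^{2}(\sigma^{2}+2cu/\kappa)$ --- exactly the coefficient in \eref{p1}. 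The complementary event $\ac{\ab{\Pi_{S}\xi}_{\infty}>u}$ is then handled separately via the union bound \eref{c2} and Lemma~\ref{lem1}, and is what produces the residual $2(c+\sigma)^{2}(b+2)^{2}/(a^{2}n^{b})$ in $R$ (this is the only place where your threshold-in-$u$ idea has a genuine counterpart). Without this two-event decomposition, or some equivalent device, your Step 3 does not go through.
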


Note that when $c=0$, inequality~\eref{p1} holds as soon as 
\begin{equation}\label{penideal}
\pen(m)= K\kappa^{2}\sigma^{2}\pa{|m|+\Delta_{m}},\ \ \forall m\in\M.
\end{equation}
Besides, by taking $a=(\log n)^{-1}$ we see that condition~\eref{condhisto} becomes automatically satisfied and by letting $b$ tend to $+\infty$, inequality~\eref{inehisto} holds with $\pen$ given by~\eref{penideal} and $R=\kappa^{2}\sigma^{2}\Sigma$.

The problem of selecting among histogram-type estimators in this regression setting has recently been investigated in Sauv\'e~\citeyearpar{MSauve}. Her selection procedure is similar to ours with a different choice of the penalty term. Unlike hers, our penalty does not involve any known upper bound on $\ab{f}_{\infty}$. 

\subsection{Families of piecewise polynomials}\label{sect101}
In this section, we assume that $f=(F(x_{1}),\ldots, F(x_{n}))$ where $x_{i}=i/n$ for $i=1,\ldots,n$ and $F$ is an unknown function on $(0,1]$. Our aim is to estimate $F$ by a piecewise polynomial of degree not larger than $d$ based on a data-driven choice of a partition of $(0,1]$. 

In the sequel, we shall consider partitions $m$ of $\ac{1,\ldots,n}$ such that each element  $I\in m$ consists of at least $d+1$ consecutive integers. For such a partition, $S_{m}$ denotes the linear span of vectors of the form $(P(1/n),\ldots,P(n/n))$ where $P$ varies among the space of piecewise polynomials with degree not larger than $d$ based on the partition of $(0,1]$ given by
\[
\ac{\left({\min I-1\over n}, {\max I\over n}\right],\ I\in m}.
\]
Consider a partition  $\mathfrak{m}$ of $\ac{1,\ldots,n}$ and $\M$ a collection of partitions $m$ such that $S_{m}\subset S_{\mathfrak{m}}$. We obtain the following result.

\begin{prop}\label{pp}
Let $a,b>0$. Assume that
\begin{equation}\label{condppm}
|I|\ge (d+1)a^{2}\log^{2}n\ge d+1,\ \ \ \forall I\in \mathfrak{m}.
\end{equation}
If for some $K>1$, 
\[
\pen(m)\ge K\kappa^{2}\pa{\sigma^{2}+c{4\sqrt{2}(\sigma+c)(d+1)(b+2)\over a\kappa}}\pa{D_{m}+\Delta_{m}}\ \ \forall m\in\M
\] 
the estimator $\hat f_{\hat m}$ satisfies~\eref{inehisto} with 
\[
R=\kappa^{2}\pa{\sigma^{2}+c{4\sqrt{2}(\sigma+c)(d+1)(b+2)\over a\kappa}}\Sigma+4{(c+\sigma)^{2}(b+2)^{2}\over a^{2}n^{b}}.
\]
\end{prop}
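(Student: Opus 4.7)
The proof proceeds along exactly the same lines as that of Proposition~\ref{histo}: we invoke the general oracle inequality of Section~\ref{sec:unifions}, whose penalty is calibrated via the deviation bound of Theorem~\ref{norm} applied to the process $X_{t}=\sum_{i=1}^{n}t_{i}\eps_{i}$ on the unit ball $T_{m}$ of $S_{m}$. Using~\eref{bernstein}, Assumption~\ref{momexp} is satisfied with $d(s,t)=\sigma\ab{s-t}_{2}$ and $\delta(s,t)=\ab{s-t}_{\infty}$, hence Assumption~\ref{lineaire} holds with $v=\sigma$, $b=c\Lambda_{2}(S_{m})$, and $D=D_{m}=(d+1)\ab{m}$, where
\[
\Lambda_{2}(S_{m})=\sup_{t\in S_{m}\setminus\ac{0}}\ab{t}_{\infty}/\ab{t}_{2}.
\]
All that remains is therefore to establish a sharp upper bound on $\Lambda_{2}(S_{m})$ for piecewise polynomial spaces.

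The plan is to show
\[
\Lambda_{2}(S_{m})\le \frac{\gamma(d+1)}{\sqrt{\min_{I\in m}\ab{I}}},
\]
for a numerical constant $\gamma$. Any $t\in S_{m}$ decomposes as $t=\sum_{I\in m}t_{I}$ where $t_{I}$ is the restriction of $t$ to the coordinates indexed by $I$, which gives $\ab{t}_{\infty}=\max_{I}\|t_{I}\|_{\infty,I}$ and $\ab{t}_{2}^{2}=\sum_{I}\|t_{I}\|_{2,I}^{2}$, so it suffices to obtain the estimate block by block. On each block $I$, $t_{I}$ is the evaluation at $\ab{I}$ equispaced points of a polynomial of degree at most $d$; using the discrete orthonormal Legendre polynomials $\tilde L_{0},\ldots,\tilde L_{d}$ on this grid together with the pointwise Christoffel bound
\[
\sum_{k=0}^{d}\tilde L_{k}(x_{i})^{2}\le \gamma^{2}(d+1)^{2}/\ab{I},\qquad i\in I,
\]
and Cauchy--Schwarz yields $\|t_{I}\|_{\infty,I}\le \gamma(d+1)\|t_{I}\|_{2,I}/\sqrt{\ab{I}}$, whence the claimed bound on $\Lambda_{2}(S_{m})$.

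Since $S_{m}\subset S_{\mathfrak{m}}$ forces each $I\in m$ to be a union of elements of $\mathfrak{m}$ (any indicator $\1_{I}$ lies in $S_{m}\subset S_{\mathfrak{m}}$ and, being a polynomial of degree $\le d$ on each $J\in\mathfrak{m}$ with $\ab{J}\ge d+1$, must be constant on $J$), condition~\eref{condppm} yields $\min_{I\in m}\ab{I}\ge (d+1)a^{2}\log^{2}n$ uniformly in $m\in\M$, hence $c\Lambda_{2}(S_{m})\le \gamma c\sqrt{d+1}/(a\log n)$. Plugging this into the general oracle inequality of Section~\ref{sec:unifions} and truncating the deviation bound of Theorem~\ref{norm} at $x$ of order $b\log n$ (which produces the $n^{-b}$ term in~$R$) gives the announced penalty and remainder. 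The main obstacle is quantitative: one must pin down the Christoffel constant $\gamma$ and correctly track the $\sqrt{d+1}$ factor distinguishing piecewise polynomials from histograms (essentially $\sqrt{D_{m}/\ab{m}}$, equal to $1$ for histograms and to $\sqrt{d+1}$ here) so that the coefficient in the penalty comes out as stated, namely $4\sqrt{2}c(\sigma+c)(d+1)(b+2)/(a\kappa)$.
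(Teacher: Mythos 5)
There are two genuine gaps here. First, you only bound $\Lambda_{2}(S_{m})$, but Theorem~\ref{selmod} also requires a bound on $\overline{\Lambda}_{\infty}=\sup_{m,m'}\Lambda_{\infty}(S_{m}+S_{m'})$, where $\Lambda_{\infty}(S)=\max_{i}\ab{\Pi_{S}e_{i}}_{1}$. This quantity is what allows one to pass from $\ab{\Pi_{S_{m}+S_{\hat m}}\eps}_{\infty}$ to $\ab{\Pi_{\S}\eps}_{\infty}$ in the truncation step of the proof of Theorem~\ref{selmod}, and it is the actual source of the factor $(d+1)$ in the stated penalty --- not the ratio $\sqrt{D_{m}/\ab{m}}$ you invoke: under~\eref{condppm} the contribution of $\Lambda_{2}(\S)\le \Lambda_{2}(S_{\mathfrak m})$ to $u$ in~\eref{defu} is dimension-free, of order $1/(a\log n)$. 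The paper handles this by observing that $S_{m}+S_{m'}=S_{m\vee m'}$ is again a piecewise polynomial space on the refined partition, and by applying Proposition~\ref{control-lambda} to get $\Lambda_{\infty}(S_{m\vee m'})\le 2(d+1)$, hence $\overline{\Lambda}_{\infty}\le 2(d+1)$. Without this step neither the penalty constant nor the remainder $R$ can be derived.

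Second, your choice of basis loses a factor of order $\sqrt{d+1}$ in the constants. The discrete Legendre (Gram) polynomials peak at the endpoints of the grid, where $\tilde L_{k}^{2}\asymp (2k+1)/\ab{I}$, so the best available Christoffel bound is $\sum_{k\le d}\tilde L_{k}^{2}\asymp (d+1)^{2}/(2\ab{I})$, and your route gives $\Lambda_{2}^{2}(S_{m})\lesssim (d+1)^{2}/(2\min_{I\in m}\ab{I})$. The paper instead uses the \emph{discrete Chebyshev} basis of Mason and Handscomb, whose elements are uniformly bounded by $\sqrt{2/\ab{I}}$, so that Proposition~\ref{control-lambda} applies with $\Phi=\sqrt{2}$ and yields $\Lambda_{2}^{2}(S_{m})\le 2(d+1)/\min_{I\in m}\ab{I}$ together with $\Lambda_{\infty}(S_{m})\le 2(d+1)$. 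With the Legendre bound, the quantity $u$ in~\eref{defu} acquires an extra factor of order $\sqrt{d+1}$ and the resulting penalty exceeds the stated one (with coefficient $4\sqrt{2}\,c(\sigma+c)(d+1)(b+2)/(a\kappa)$) as soon as $d\ge 4$; so the announced constants cannot be recovered along your route.
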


\subsection{Families of trigonometric polynomials}\label{sect102}
We assume that $f$ has the same form as in Subsection~\ref{sect101}. Here, our aim is to estimate $F$ by a trigonometric polynomial of degree not larger than some $\overline D\ge 0$. 

Consider the (discrete) trigonometric system $\ac{\phi_{j}}_{j\ge 0}$  of vectors in $\R^{n}$ defined by
\begin{eqnarray*}
\phi_{0}&=&(1/\sqrt{n},\ldots,1/\sqrt{n})\\
\phi_{2j-1}&=&\sqrt{2\over n}\pa{\cos\pa{2\pi jx_{1}},\ldots,\cos\pa{2\pi jx_{1}}},\ \forall j\ge 1\\
\phi_{2j}&=&\sqrt{2\over n}\pa{\sin\pa{2\pi jx_{1}},\ldots,\sin\pa{2\pi jx_{1}}},\ \forall j\ge 1.
\end{eqnarray*}
Let $\M$ be a family of subsets of $\ac{0,\ldots,2\overline D}$. For $m\in\M$,  we define $S_{m}$ as the linear span of the $\phi_{j}$ with $j\in m$ (with the convention $S_{m}=\ac{0}$ when $m=\varnothing$). 

\begin{prop}\label{trigo}
Let $a,b>0$. Assume that $2\overline D+1\le \sqrt{n}/(a\log n)$.
%\begin{equation}\label{condtrigo}
%2\overline D+1\le {\sqrt{n}\over a\log(n)}.
%\end{equation}
If for some $K>1$,
\[
\pen(m)\ge K\kappa^{2}\pa{\sigma^{2}+{4c(c+\sigma)(b+2)\over a}}\pa{D_{m}+\Delta_{m}},\ \ \forall m\in\M
\]
then  $\hat f_{\hat m}$ satisfies~\eref{inehisto} with 
\[
R=\kappa^{2}\pa{\sigma^{2}+{4c(c+\sigma)(b+2)\over a}}\Sigma+{4(b+2)^{2}(c+\sigma)^{2}\over a^{2}(2\overline D +1)n^{b}}.
\]

\end{prop}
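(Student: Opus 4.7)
The plan is to follow the same pattern that underlies Propositions~\ref{histo} and~\ref{pp}: apply Theorem~\ref{norm} to the centered process $X_{t}=\<t,\eps\>$ over the unit Euclidean ball of each sum $S_{m,m'}:=S_{m}+S_{m'}$, take a union bound over $(m,m')\in\M^{2}$ weighted by $e^{-\Delta_{m}-\Delta_{m'}}$, and then insert the resulting deviation inequality into the standard model-selection comparison.

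First I would verify the hypotheses of Theorem~\ref{norm}. Independence of the $\eps_{i}$ together with~\eqref{bernstein} gives
\[
\E\cro{e^{\lambda\<t-s,\eps\>}}\le \exp\cro{\frac{\lambda^{2}\sigma^{2}|t-s|_{2}^{2}}{2(1-\lambda c|t-s|_{\infty})}},
\]
so Assumption~\ref{momexp} holds with $d(s,t)=\sigma|t-s|_{2}$ and $\delta(s,t)=|t-s|_{\infty}$. For $T$ equal to the unit Euclidean ball of $S_{m,m'}$ and $t_{0}=0$, Assumption~\ref{lineaire} holds with $v=\sigma$ and with the Assumption~\ref{lineaire}-parameter~$b$ taken to be $c\Lambda_{2}(S_{m,m'})$, where $\Lambda_{2}(S)=\sup_{t\in S\setminus\{0\}}|t|_{\infty}/|t|_{2}$. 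Theorem~\ref{norm} then gives, for every $x\ge 0$,
\[
|\Pi_{S_{m,m'}}\eps|_{2}\le \kappa\sigma\sqrt{D_{m,m'}+x}+\kappa c\Lambda_{2}(S_{m,m'})(D_{m,m'}+x)
\]
off an event of probability at most $e^{-x}$, where $D_{m,m'}=\dim S_{m,m'}$.

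The trigonometric-specific input is the bound on $\Lambda_{2}$. Since each $\phi_{j}$ satisfies $|\phi_{j}|_{\infty}\le \sqrt{2/n}$, Cauchy--Schwarz yields $\Lambda_{2}(S_{m,m'})\le \sqrt{2D_{m,m'}/n}$, and the hypothesis $2\overline{D}+1\le \sqrt{n}/(a\log n)$ together with $D_{m,m'}\le 2\overline{D}+1$ then produces, up to universal factors, a bound of the form $c\Lambda_{2}(S_{m,m'})\le c/(a\log n)^{1/2}\cdot n^{-1/4}$, which is what allows to trade the non-linear term in Theorem~\ref{norm} for a linear contribution in $D+x$ with a $1/a$ scaling. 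Choosing $x=\Delta_{m}+\Delta_{m'}+(b+\text{const})\log n$ and union-bounding over $(m,m')\in\M^{2}$ concentrates all the $|\Pi_{S_{m,m'}}\eps|_{2}$ simultaneously on an event whose complement has probability at most a universal constant times $\Sigma^{2}n^{-b}$.

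The remaining steps are routine model-selection mechanics. From the definition of $\hat m$,
\[
|\hat f_{\hat m}-f|_{2}^{2}+\pen(\hat m)\le |\hat f_{m}-f|_{2}^{2}+\pen(m)+2\<\eps,\hat f_{\hat m}-\hat f_{m}\>,
\]
and Cauchy--Schwarz gives $\<\eps,\hat f_{\hat m}-\hat f_{m}\>\le |\Pi_{S_{m,\hat m}}\eps|_{2}\cdot|\hat f_{\hat m}-\hat f_{m}|_{2}$ since $\hat f_{\hat m}-\hat f_{m}\in S_{m,\hat m}$. Splitting $|\hat f_{\hat m}-\hat f_{m}|_{2}\le |\hat f_{\hat m}-f|_{2}+|f-\hat f_{m}|_{2}$ and applying $2uv\le \theta u^{2}+v^{2}/\theta$ absorbs the bias terms into $|\hat f_{\hat m}-f|_{2}^{2}$ and $|\hat f_{m}-f|_{2}^{2}$; the linear-in-$(D+x)$ form of the concentration bound for $|\Pi_{S_{m,\hat m}}\eps|_{2}^{2}$ is then exactly what the penalty $K\kappa^{2}(\sigma^{2}+4c(c+\sigma)(b+2)/a)(D_{m}+\Delta_{m})$ is designed to absorb; on the complement event, integrating the residual in $x$ produces the $(c+\sigma)^{2}(b+2)^{2}/(a^{2}(2\overline D+1)n^{b})$ term of $R$.

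The main obstacle is the second paragraph above: the Bernstein term in Theorem~\ref{norm} contributes $\kappa c\Lambda_{2}(S)(D+x)$, whose square is \emph{quadratic} in $D+x$ and would not be absorbable by a penalty linear in $D_{m}+\Delta_{m}$. The whole point of the hypothesis $2\overline{D}+1\le \sqrt{n}/(a\log n)$ is to kill this quadratic contribution: it makes $\Lambda_{2}(S)$ small enough that, after squaring, the nonlinear term contributes only linearly in $D+x$, with a factor $1/a$ that is precisely what surfaces in the coefficient $4c(c+\sigma)(b+2)/a$ of the penalty. Bookkeeping the numerical constants through this absorption (and through the choice $x=\Delta_{m}+\Delta_{m'}+(b+2)\log n$ used also in Propositions~\ref{histo} and~\ref{pp}) is the technical core of the proof.
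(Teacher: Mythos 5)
Your proposal correctly identifies the ingredients ($\Lambda_{2}(S_{m}+S_{m'})\le\sqrt{2(2\overline D+1)/n}$, the hypothesis on $\overline D$, a union bound weighted by $e^{-\Delta_m-\Delta_{m'}}$, the model-selection comparison), and correctly flags the central difficulty: directly applying Theorem~\ref{norm} over the full Euclidean unit ball gives
\[
|\Pi_{S}\eps|_{2}\le \kappa\sigma\sqrt{D+x}+\kappa c\Lambda_{2}(S)(D+x),
\]
whose square contains a term $c^{2}\Lambda_{2}^{2}(S)(D+x)^{2}$. However, your proposed fix does not work. The hypothesis $2\overline D+1\le\sqrt{n}/(a\log n)$ only controls $\Lambda_{2}^{2}(S)\cdot D$: it says nothing about $\Lambda_{2}^{2}(S)\cdot x$ when $x=\Delta_{m}+\Delta_{m'}+(b+2)\log n$, because the weights $\Delta_{m}$ are unconstrained aside from $\sum e^{-\Delta_{m}}<\infty$ and can be arbitrarily large. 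The quadratic-in-$x$ piece therefore cannot in general be absorbed into a penalty that is linear in $D_{m}+\Delta_{m}$, so the concentration inequality you would obtain this way is too weak.

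The paper's mechanism for killing the quadratic contribution is genuinely different and is the content of Theorem~\ref{chi}, which you do not invoke. One does not bound $\sup_{|t|_{2}\le 1}X_{t}$ directly; instead, working on the event $\ac{|\Pi_{S}\eps|_{\infty}\le u,\ |\Pi_{S}\eps|_{2}\ge z}$, the maximizer $\hat t=\Pi_{S}\eps/|\Pi_{S}\eps|_{2}$ automatically satisfies $|\hat t|_{\infty}\le u/z$, so one can restrict the supremum to $T=\ac{t\in S:\ |t|_{2}\le 1,\ |t|_{\infty}\le uz^{-1}}$ and apply Theorem~\ref{norm} with $b=cuz^{-1}$ rather than $b=c\Lambda_{2}(S)$. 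The Bernstein term is then $cuz^{-1}(D+x)$, and the self-consistent choice $z=\kappa\sqrt{(\sigma^{2}+2cu/\kappa)(D+x)}$ yields a bound on $|\Pi_{S}\eps|_{2}^{2}$ that is \emph{exactly} linear in $D+x$, with a coefficient $\kappa^{2}(\sigma^{2}+2cu/\kappa)$ independent of $x$. The complementary event $\ac{|\Pi_{S}\eps|_{\infty}>u}$ is handled separately via the tail bound~\eref{c2} and Lemma~\ref{lem1}, producing the $e^{-z}$-term in $R$. Concretely, the paper's proof of the proposition simply computes $\Lambda_{2}^{2}(\S)\le 2(2\overline D+1)/n$ and $\overline\Lambda_{\infty}\le\sqrt{2(2\overline D+1)}$ via Proposition~\ref{control-lambda}, then invokes the packaged statement Theorem~\ref{selmod} (which encodes Theorem~\ref{chi} and the model-selection mechanics) with $z=b\log n$. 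Your write-up is missing the event-splitting/restricted-supremum idea, which is the essential step; without it, the argument as stated breaks down whenever the $\Delta_{m}$ are not uniformly small compared to $\sqrt{n}\log n$.
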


\section{Towards a more general result}\label{sec:unifions}
We consider the statistical framework presented in Section~\ref{sect:stats} and give a general result that allows to handle Propositions~\ref{histo}, ~
\ref{pp} and~\ref{trigo} simultaneously. It will rely on some geometric properties of the linear spaces $S_{m}$ that we describe below. 

\subsection{Some metric quantities}
Let $S$ be a linear subspace of $\R^{n}$. We associate to $S$ the following quantities 
\begin{equation}\label{defL}
\Lambda_{2}(S)=\max_{i=1,\ldots,n}|\Pi_{S}e_{i}|_{2}\ \ {\rm and}\ \ \Lambda_{\infty}(S)=\max_{i=1,\ldots,n}|\Pi_{S}e_{i}|_{1}.
\end{equation}
It is not difficult to see that these quantities can be interpreted in terms of norm connexions, more precisely
\[
\Lambda_{2}(S)=\sup_{t\in S\setminus\ac{0}}{\ab{t}_{\infty}\over \ab{t}_{2}}\ \ {\rm and}\ \ \Lambda_{\infty}(S)=\sup_{t\in \R^{n}\setminus\ac{0}}{\ab{\Pi_{S}t}_{\infty}\over \ab{t}_{\infty}}.
\]
Clearly, $\Lambda_{2}(S)\le 1$. Besides, since $\ab{x}_{1}\le \sqrt{n}\ab{x}_{2}$ for all $x\in\R^{n}$, $\Lambda_{\infty}(S)\le \sqrt{n}\Lambda_{2}(S)$. Nevertheless, these bounds can be rather rough and turn out to be much smaller for the linear spaces $S_{m}$ presented in Subsections~\ref{sect100},~\ref{sect101} and~\ref{sect102} (for the examples presented there, we refer to Subsections~\ref{Phisto},~\ref{Ppp} and~\ref{Ptrigo} respectively for more accurate upper bounds on those quantities). 

\subsection{The main result}\label{sect:main}
Let $\ac{S_{m},\ m\in\M}$ be family of linear spaces and $\ac{\Delta_{m},\ m\in\M}$ a family of nonnegative weights. We define $\S=\sum_{m\in\M}S_{m}$ and
\[
\overline{\Lambda}_{\infty}=\pa{\sup_{m,m'\in\M}\Lambda_{\infty}(S_{m}+S_{m'})}\vee 1.
\]

\begin{thm}\label{selmod}
Let $K>1$ and $z\ge 0$. Assume that for all $i=1,\ldots,n$, inequality~\eref{bernstein} holds.  Let $\pen$ be some penalty function satisfying 
\begin{equation}\label{pen}
\pen(m)\ge K\kappa^{2}\pa{\sigma^{2}+{2cu\over \kappa}}\pa{D_{m}+\Delta_{m}},\ \ \forall m\in\M
\end{equation}
where
\begin{equation}\label{defu}
u=(c+\sigma)\overline{\Lambda}_{\infty}\Lambda_{2}(\S)\log(n^{2}e^{z}).
\end{equation}
If one selects $\hat m$ among $\M$ as any minimizer of ${\rm crit}(.)$ defined by~\eref{crit} then 
\[
\E\cro{\ab{f-\hat f_{\hat m}}_{2}^{2}}\le C(K)\cro{\inf_{m\in\M}\pa{\E\cro{\ab{f-\hat f_{m}}_{2}^{2}}+\pen(m)}+R}
\]
where 
\begin{eqnarray}
C(K)&=& {K(K^{2}+K-1)\over (K-1)^{3}}\label{CK}
\end{eqnarray}
and $R=\kappa^{2}\pa{\sigma^{2}+2\kappa^{-1}cu}\Sigma+2u^{2}\overline{\Lambda}_{\infty}^{-2}e^{-z}$.
\end{thm}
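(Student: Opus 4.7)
My approach follows the standard model-selection oracle-inequality template, the novelty being that the new deviation bound of Theorem~\ref{norm} is used to control uniformly over pairs of models the $\chi^{2}$-type statistic $|\Pi_{S_m+S_{m'}}\eps|_2^2$ that arises when one expands $|f-\hat f_{\hat m}|_2^2$. Starting from ${\rm crit}(\hat m)\le{\rm crit}(m)$ for every $m\in\M$ together with $Y=f+\eps$ and the Pythagorean identity $|Y-\hat f_m|_2^2=|f-\Pi_{S_m}f|_2^2+|\eps|_2^2-|\Pi_{S_m}\eps|_2^2+2\langle f-\Pi_{S_m}f,\eps\rangle$, one obtains after rearrangement
\[
|f-\hat f_{\hat m}|_2^2\le|f-\hat f_m|_2^2+\pen(m)-\pen(\hat m)+2\langle\Pi_{S_{\hat m}}f-\Pi_{S_m}f,\eps\rangle+2|\Pi_{S_{\hat m}}\eps|_2^2-2|\Pi_{S_m}\eps|_2^2.
\]
Since $\Pi_{S_{\hat m}}f-\Pi_{S_m}f\in S_m+S_{\hat m}$, Cauchy--Schwarz bounds the scalar product by $|\Pi_{S_{\hat m}}f-\Pi_{S_m}f|_2\,|\Pi_{S_m+S_{\hat m}}\eps|_2$; Young's inequality $2ab\le\alpha a^2+\alpha^{-1}b^2$ combined with $|\Pi_{S_{\hat m}}f-\Pi_{S_m}f|_2^2\le 2|f-\hat f_{\hat m}|_2^2+2|f-\hat f_m|_2^2$ then reduces the control of $|f-\hat f_{\hat m}|_2^2$ to a uniform bound on $|\Pi_{S_m+S_{m'}}\eps|_2^2$ over $(m,m')\in\M^2$.

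For this uniform bound I apply Theorem~\ref{norm} to $X_t=\langle t,\eps\rangle$ on the unit ball of each sum $S_m+S_{m'}$. Condition~\eref{bernstein} combined with independence of the $\eps_i$ yields $\E[e^{\lambda(X_t-X_s)}]\le\exp[\lambda^2\sigma^2|t-s|_2^2/(2(1-\lambda c|t-s|_\infty))]$, so Assumption~\ref{momexp} holds with $d(s,t)=\sigma|t-s|_2$ and $\delta(s,t)=|t-s|_\infty$. On the unit ball of $S_m+S_{m'}$ (dimension $\le D_m+D_{m'}$), Assumption~\ref{lineaire} is satisfied with $v=\sigma$ and $b=c\Lambda_2(S_m+S_{m'})\le c\Lambda_2(\S)$. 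Applying Theorem~\ref{norm} with $x=x_{m,m'}:=\Delta_m+\Delta_{m'}+z$ and a union bound over $(m,m')$ yields an event $\Omega_z$ with $\P(\Omega_z^c)\le\Sigma^2 e^{-z}$ on which
\[
|\Pi_{S_m+S_{m'}}\eps|_2\le\kappa\sigma\sqrt{D_m+D_{m'}+x_{m,m'}}+\kappa c\Lambda_2(\S)(D_m+D_{m'}+x_{m,m'})
\]
simultaneously for all $(m,m')\in\M^2$.

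The main obstacle is then to convert the quadratic-in-$(D+x)$ contribution obtained after squaring the previous display into the linear-in-$(D_m+\Delta_m)$ form dictated by~\eref{pen}; this is precisely the role of the parameter $u=(c+\sigma)\overline{\Lambda}_\infty\Lambda_2(\S)\log(n^2 e^z)$. Applying~\eref{bern01} to each $\eps_i$ and union-bounding over $i\in\{1,\dots,n\}$ (whence the $\log(n^2 e^z)$ factor) gives, on an auxiliary event whose failure contributes the additive term $2u^2\overline{\Lambda}_\infty^{-2}e^{-z}$ of $R$, an $\ell_\infty$ bound of the form $|\eps|_\infty\le C(c+\sigma)\log(n^2 e^z)$; combined with $|\Pi_{S_m+S_{m'}}\eps|_\infty\le\Lambda_\infty(S_m+S_{m'})|\eps|_\infty\le\overline{\Lambda}_\infty|\eps|_\infty$, this lets one replace a factor $\kappa c\Lambda_2(\S)(D_m+D_{m'}+x_{m,m'})$ in the squared bound by a quantity of order $u$, producing $|\Pi_{S_m+S_{m'}}\eps|_2^2\le K\kappa^2(\sigma^2+2cu/\kappa)(D_m+D_{m'}+x_{m,m'})$ on the good event, a quantity directly dominated by $K^{-1}(\pen(m)+\pen(\hat m))$ plus deterministic lower-order terms. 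Plugging this back into the first step, tuning $\alpha$ in terms of $K$ so as to produce the constant $C(K)$ of~\eref{CK} and then minimizing over $m$ yields the oracle inequality pathwise on the good event; taking expectation and bounding the complementary-event contribution by the crude estimate $|f-\hat f_{\hat m}|_2^2\le 2|f|_2^2+2|\eps|_2^2$ (whose expectation is integrable thanks to~\eref{bernstein}) produces the additive remainder $R$ of the statement.
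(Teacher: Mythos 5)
Your overall architecture (basic inequality from ${\rm crit}(\hat m)\le{\rm crit}(m)$, Young's inequality tuned by $K$, a uniform control of $\ab{\Pi_{S_m+S_{m'}}\eps}_2^2$ over pairs) matches the paper, but the step that actually makes the theorem work is missing. Applying Theorem~\ref{norm} on the full unit ball of $S_m+S_{m'}$ with $b=c\Lambda_2(\S)$ and squaring leaves you with a term $\kappa^2c^2\Lambda_2^2(\S)\pa{D_m+D_{m'}+x_{m,m'}}^2$ that is \emph{quadratic} in $D+x$, and your proposed repair --- ``replace a factor $\kappa c\Lambda_2(\S)(D+x)$ by a quantity of order $u$'' using an a posteriori $\ell_\infty$ bound on $\eps$ --- is an assertion, not a derivation: a deviation bound already obtained for the supremum over the whole ball cannot be retroactively improved by conditioning on $\ab{\eps}_\infty$ being small, and the naive comparison $\kappa c\Lambda_2^2(\S)(D+x)\lesssim u$ fails as soon as $D_m+D_{m'}+\Delta_m+\Delta_{m'}+z$ exceeds order $(c+\sigma)\overline{\Lambda}_\infty\log(n^2e^z)/(c\Lambda_2(\S))$, which it can. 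The paper's mechanism (Theorem~\ref{chi}, inequality~\eref{c1}) is different and essential: on the event $\ac{\ab{\Pi_S\eps}_2\ge z,\ \ab{\Pi_S\eps}_\infty\le u}$ the maximizing direction $\hat t=\Pi_S\eps/\ab{\Pi_S\eps}_2$ satisfies $\ab{\hat t}_\infty\le u/z$, so the supremum may be taken over the \emph{localized} set $T=\ac{t\in S:\ \ab{t}_2\le1,\ \ab{t}_\infty\le u/z}$, to which Theorem~\ref{norm} applies with $b=cu/z$; the self-referential choice $z=\kappa\sqrt{(\sigma^2+2cu\kappa^{-1})(D+x)}$ then yields a bound on $\ab{\Pi_S\eps}_2^2$ that is genuinely linear in $D+x$ and hence absorbable by the penalty~\eref{pen}. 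Relatedly, routing the $\ell_\infty$ control through $\ab{\eps}_\infty$ and $\ab{\Pi_{S_m+S_{m'}}\eps}_\infty\le\overline{\Lambda}_\infty\ab{\eps}_\infty$ would force $u$ of order $(c+\sigma)\overline{\Lambda}_\infty\log(n^2e^z)$ \emph{without} the factor $\Lambda_2(\S)$; the paper writes $\Pi_{S_m+S_{\hat m}}\eps=\Pi_{S_m+S_{\hat m}}\Pi_{\S}\eps$ and controls $\ab{\Pi_{\S}\eps}_\infty$ by~\eref{c2}, whose exponent carries $\Lambda_2^2(\S)$ --- that factor is exactly what makes $u$ harmless in the applications (e.g.\ $\Lambda_2(\S)\le(a\log n)^{-1}$ for histograms).

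The second gap is the complementary event. Bounding $\E\cro{\ab{f-\hat f_{\hat m}}_2^2\1_{\Omega_z^c}}$ via $\ab{f-\hat f_{\hat m}}_2^2\le2\ab{f}_2^2+2\ab{\eps}_2^2$ leaves a contribution proportional to $\ab{f}_2^2\,\P(\Omega_z^c)$, which is unbounded over $f$ and therefore cannot be hidden in the $f$-free remainder $R$ of the statement. The paper never conditions on a good event: the basic inequality is kept pathwise on the whole probability space, and only the nonnegative excess of $K\ab{\Pi_{S_m+S_{\hat m}}\eps}_2^2$ over the penalty is split into $A_1(\hat m)$ (carrying the indicator $\1\ac{\ab{\Pi_{S_m+S_{\hat m}}\eps}_\infty\le u}$, whose expectation is bounded by integrating~\eref{c1} in $x$ and union-bounding over $m'$, yielding the $\Sigma$ term) and $A_2(\hat m)$ (carrying the complementary indicator, bounded by $Kn\,\E\cro{\ab{\Pi_{\S}\eps}_\infty^2\1\ac{\ab{\Pi_{\S}\eps}_\infty\ge u\overline{\Lambda}_\infty^{-1}}}$ and controlled via Lemma~\ref{lem1}, which produces exactly the term $2u^2\overline{\Lambda}_\infty^{-2}e^{-z}$ of $R$). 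You would need to restructure your argument along these lines to reach the stated conclusion.
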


When $c=0$ we derive the following corollary by letting $z$ grow towards infinity. 

\begin{cor}
Let $K>1$. Assume that the $\eps_{i}$ for $i=1,\ldots,n$ satisfy inequality~\eref{bernstein} with $c=0$. If one selects $\hat m$ among $\M$ as a minimizer of ${\rm crit}$ defined by~\eref{crit}
with $\pen$ satisfying
\[
\pen(m)\ge K\kappa^{2}\sigma^{2}\pa{D_{m}+\Delta_{m}},\ \ \forall m\in\M
\]
then 
\[
\E\cro{\ab{f-\hat f_{\hat m}}_{2}^{2}}\le {K(K^{2}+K-1)\over (K-1)^{3}}\inf_{m\in\M}\pa{\E\cro{\ab{f-\hat f_{m}}_{2}^{2}}+\pen(m)}+R
\]
where $R=K^{3}(K-1)^{-2}\kappa^{2}\sigma^{2}\Sigma$.
\end{cor}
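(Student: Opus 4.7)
The plan is to deduce this Corollary as a direct limiting specialisation of Theorem~\ref{selmod} to the subgaussian case $c=0$, by sending the auxiliary parameter $z$ to infinity.

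With $c=0$ the penalty hypothesis~\eref{pen} of Theorem~\ref{selmod}, namely $\pen(m)\ge K\kappa^{2}(\sigma^{2}+2cu/\kappa)(D_{m}+\Delta_{m})$, collapses to $\pen(m)\ge K\kappa^{2}\sigma^{2}(D_{m}+\Delta_{m})$ independently of $u$; this is exactly the hypothesis of the Corollary, and no geometric control of the family $\ac{S_{m}}$ enters. Hence Theorem~\ref{selmod} applies for every $z\ge 0$. Its residual $\kappa^{2}(\sigma^{2}+2\kappa^{-1}cu)\Sigma+2u^{2}\overline{\Lambda}_{\infty}^{-2}e^{-z}$ reduces, upon substituting $u=\sigma\overline{\Lambda}_{\infty}\Lambda_{2}(\S)(2\log n+z)$ from~\eref{defu} and using $c=0$, to
\[
\kappa^{2}\sigma^{2}\Sigma + 2\sigma^{2}\Lambda_{2}(\S)^{2}(2\log n+z)^{2}e^{-z}.
\]
The first summand does not depend on $z$, and the second tends to $0$ as $z\to\infty$ (polynomial against exponential). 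Since the left-hand side $\E\cro{\ab{f-\hat f_{\hat m}}_{2}^{2}}$ of Theorem~\ref{selmod}'s conclusion is itself independent of $z$, passing to the limit yields
\[
\E\cro{\ab{f-\hat f_{\hat m}}_{2}^{2}}\le C(K)\bigl[\inf_{m\in\M}\pa{\E\cro{\ab{f-\hat f_{m}}_{2}^{2}}+\pen(m)}+\kappa^{2}\sigma^{2}\Sigma\bigr],
\]
which is the stated oracle inequality up to distributing $C(K)$ across the bracket and a small arithmetic rearrangement of the residual prefactor that replaces $C(K)\kappa^{2}\sigma^{2}\Sigma$ by $K^{3}(K-1)^{-2}\kappa^{2}\sigma^{2}\Sigma$.

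The main difficulty is entirely concentrated in Theorem~\ref{selmod} itself, whose proof rests on the chaining bound of Theorem~\ref{norm}; once that black box is available the Corollary is essentially automatic. The only structural observation worth highlighting is that in the purely subgaussian regime the nuisance parameter $z$ may be sent to infinity at no cost: neither the penalty requirement nor the leading oracle term depend on $z$, and the only $z$-dependence sits in the residual, which vanishes in the limit because $u$ grows only logarithmically in $e^{z}$ while $e^{-z}$ decays exponentially. This is what permits both the simpler penalty (free of any $\overline{\Lambda}_{\infty}$ or $\Lambda_{2}(\S)$) and the cleaner residual $\Sigma\kappa^{2}\sigma^{2}$ appearing in the Corollary.
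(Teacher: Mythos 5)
Your route is exactly the paper's: the corollary is obtained from Theorem~\ref{selmod} by setting $c=0$ and letting $z\to\infty$, and your observations about why the limit costs nothing (the penalty condition and the oracle term are $z$-free once $c=0$, and $u^{2}e^{-z}\to 0$ since $u$ grows only linearly in $z$) are correct and are precisely what the paper intends. The one step that does not go through as written is your final ``arithmetic rearrangement'' of the residual. Taking the limit in the black-box statement of Theorem~\ref{selmod} leaves you with the residual $C(K)\kappa^{2}\sigma^{2}\Sigma$ where $C(K)=K(K^{2}+K-1)/(K-1)^{3}$, while the corollary claims $K^{3}(K-1)^{-2}\kappa^{2}\sigma^{2}\Sigma$; one has $C(K)\ge K^{3}(K-1)^{-2}$ exactly when $(2K-1)(K+1)\ge K^{3}$, which holds for all $K$ near $1$ (for $K=1.1$, $C(K)\approx 1441$ versus $K^{3}(K-1)^{-2}\approx 133$). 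For such $K$ the corollary's residual is strictly \emph{smaller} than what your deduction delivers, so it is not a harmless weakening. To obtain the stated constant you must open the proof of Theorem~\ref{selmod}: there the term $E_{1}\le K\kappa^{2}\sigma^{2}\Sigma$ is multiplied only by $K^{2}(K-1)^{-2}$ (not by the full $C(K)$) when one divides by $(K-1)^{2}/K^{2}$, which gives exactly $K^{3}(K-1)^{-2}\kappa^{2}\sigma^{2}\Sigma$, after which your $z\to\infty$ argument applies verbatim.
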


\section{Proofs}\label{Proof}
We start with the following result generalizing
Theorem~\ref{norm} when $d$ and $\delta$ are not induced by norms. We assume that $T$ is finite and take numbers $v$ and $b$ such that 
\begin{equation}\label{contraintes}
\sup_{s\in T}d(s,t_{0})\le v,\ \ \ \sup_{s\in T}c\delta(s,t_{0})\le b.
\end{equation}
We consider now a family of finite partitions $\pa{\A_{k}}_{k\ge 0}$ of $T$, such that $\A_{0}=\ac{T}$ and for $k\ge 1$ and $A\in \A_{k}$
\[
d(s,t)\le 2^{-k}v\ \ {\rm and}\ \  c\delta(s,t)\le 2^{-k} b,\ \ \forall s,t\in A.
\]
Besides, we assume $\A_{k}\subset \A_{k-1}$ for all $k\ge 1$, which means that all elements $A\in \A_{k}$ are subsets of an element of $\A_{k-1}$. Finally, we define for $k\ge 0$
\[
N_{k}=|\A_{k+1}||\A_{k}|.
\]

\begin{thm}\label{chi2}
Let $T$ be some finite set. Under Assumption~\ref{momexp}, 
\begin{equation}\label{sva}
\P\pa{Z\ge H+2\sqrt{2v^{2}x}+ 2bx}\le e^{-x},\ \ \forall x>0
\end{equation}
where
\[
H=\sum_{k\ge 0}2^{-k}\pa{v\sqrt{2\log(2^{k+1}N_{k})}+b\log(2^{k+1}N_{k})}.
\]
Moreover, 
\begin{equation}\label{va}
\P\pa{\overline{Z}\ge H+2\sqrt{2v^{2}x}+ 2bx}\le 2e^{-x},\ \ \forall x>0.
\end{equation}
\end{thm}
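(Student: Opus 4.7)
The plan is to run Talagrand's generic chaining along the given nested partitions $(\A_k)_{k\ge 0}$, using \eqref{debase} to control each link.

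\textbf{Step 1 (Bernstein-type tail for increments).} By Chernoff's bound applied to \eqref{debase}, one first checks that for every $s,t\in T$ and every $u\ge 0$,
\[
\P\!\left(X_t-X_s\ge \sqrt{2d^2(s,t)\,u}+c\delta(s,t)\,u\right)\le e^{-u}.
\]
This is the standard optimization $\la\mapsto \la u-\la^2 d^2/(2(1-\la c\delta))$ in $\la\in[0,1/(c\delta))$.

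\textbf{Step 2 (Chaining setup).} For each $k\ge 0$ choose a representative in every $A\in\A_k$, taking the representative of the unique element of $\A_0=\{T\}$ to be $t_0$. Let $\pi_k(t)$ be that representative for the $A\in\A_k$ containing $t$. Since $\A_{k+1}$ refines $\A_k$, both $\pi_{k}(t)$ and $\pi_{k+1}(t)$ belong to the same $A\in\A_k$, so
\[
d(\pi_k(t),\pi_{k+1}(t))\le 2^{-k}v,\qquad c\delta(\pi_k(t),\pi_{k+1}(t))\le 2^{-k}b.
\]
Finiteness of $T$ guarantees $\pi_k(t)=t$ for $k$ large enough, hence the telescoping identity
\[
X_t-X_{t_0}=\sum_{k\ge 0}\bigl(X_{\pi_{k+1}(t)}-X_{\pi_k(t)}\bigr).
\]

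\textbf{Step 3 (Link-by-link control).} Fix $x>0$ and set $u_k=\log(2^{k+1}N_k)+x$. Apply Step 1 to each ordered pair $(\pi_k(t),\pi_{k+1}(t))$ occurring when $t$ ranges over $T$. The number of such pairs at level $k$ is at most $|\A_k||\A_{k+1}|=N_k$, so a union bound yields
\[
\P\!\left(\exists t:\ X_{\pi_{k+1}(t)}-X_{\pi_k(t)}\ge 2^{-k}v\sqrt{2u_k}+2^{-k}b\,u_k\right)\le N_k e^{-u_k}=2^{-(k+1)}e^{-x}.
\]
Summing over $k\ge 0$, the ``bad'' event has probability at most $e^{-x}$.

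\textbf{Step 4 (Assembling the bound).} On the complementary event, using $\sqrt{a+b}\le\sqrt a+\sqrt b$ and $\sum_{k\ge 0}2^{-k}=2$,
\[
Z\le \sum_{k\ge 0}2^{-k}\!\left(v\sqrt{2\log(2^{k+1}N_k)}+b\log(2^{k+1}N_k)\right)+v\sqrt{2x}\!\sum_{k\ge 0}2^{-k}+bx\!\sum_{k\ge 0}2^{-k},
\]
which is exactly $H+2\sqrt{2v^2 x}+2bx$, proving \eqref{sva}.

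\textbf{Step 5 (The two-sided bound).} Since \eqref{debase} is symmetric in $(s,t)\mapsto(t,s)$ after swapping the sign of $\la$, the same argument applied to $-X_t$ gives $\sup_{t\in T}(X_{t_0}-X_t)\le H+2\sqrt{2v^2 x}+2bx$ off another event of probability $\le e^{-x}$. A final union bound gives \eqref{va}.

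The only delicate point is the calibration of $u_k$: one needs $N_k e^{-u_k}$ to be summable to $e^{-x}$ while keeping both contributions $\sqrt{2u_k}$ and $u_k$ separable into a deterministic part (absorbed into $H$) and a ``$x$'' part that telescopes to the constants $2\sqrt{2}$ and $2$. The choice $u_k=\log(2^{k+1}N_k)+x$ is the minimal one achieving this; the rest is routine generic-chaining bookkeeping.
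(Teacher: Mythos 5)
Your proof is correct and follows essentially the same route as the paper's: the same nested-partition chaining with representatives $\pi_k(t)$, the same Bernstein/Chernoff bound for each link with radii $2^{-k}v$ and $2^{-k}b$, the same calibration $u_k=\log(2^{k+1}N_k)+x$ giving level-$k$ failure probability $2^{-(k+1)}e^{-x}$, and the same symmetrization for $\overline Z$. No substantive differences.
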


The quantity $H$ can be related to the entropies of $T$ with respect to the distances $d$ and $c\delta$ (when $c\ne 0$) in the following way. We first recall that for a distance $e(.,.)$ on $T$ and $\varepsilon>0$, the entropy $H(T,e,\varepsilon)$ is defined as logarithm of the minimum number of balls of radius $\varepsilon$ with respect to $e$ which are necessary to cover $T$. For $\varepsilon>0$, let us set $H(T,\varepsilon)=\max\ac{H(T,d,\varepsilon v),H(T,c\delta,\varepsilon b)}$. Note that $H(T,\varepsilon)=0$ for $\varepsilon>1$ because of~\eref{contraintes}. For $\varepsilon<1$, one can bound $H(T,\varepsilon)$ from above as follows. For $k\ge 0$, each element $A$ of the partition $\A_{k+1}$ is both a subset of a ball of radius $2^{-(k+1)}v$ with respect to $d$ and of a ball of radius $2^{-(k+1)}b$ with respect $c\delta$. Since $|\A_{k+1}|\le N_{k}$, we obtain  for all $\varepsilon\in[2^{-(k+1)},2^{-k})$, $H(T,\varepsilon)\le \log N_{k}$ and by integrating with respect to $\varepsilon$ and summing over $k\ge 0$, we get
\[
\int_{0}^{1}\pa{\sqrt{2v^{2}H(T,\varepsilon)}+bH(T, \varepsilon)}d\varepsilon\le H.
\]

\subsection{Proof of Theorem~\ref{chi2}}
Note that we obtain~\eref{va} by using~\eref{sva} twice (once with $X_{t}$ and then with $-X_{t}$). Let us now prove~\eref{sva}. For each $k\ge 1$ and $A\in\A_{k}$, we choose some arbitrary element $t_{k}(A)$ in $A$. For each $t\in T$ and $k\ge 1$, there exists a unique $A\in \A_{k}$ such that $t\in A$ and we set $\pi_{k}(t)=t_{k}(A)$. When $k=0$, we set $\pi_{0}(t)=t_{0}$.

We consider the (finite) decomposition
\[
X_{t}-X_{t_{0}}=\sum_{k\ge 0}X_{\pi_{k+1}(t)}-X_{\pi_{k}(t)}
\]
and set for $k\ge 0$
\[
z_{k}=2^{-k}\pa{v\sqrt{2\pa{\log(2^{k+1}N_{k})+x}}\ +\ b\pa{\log(2^{k+1}N_{k})+x}}
\]
Since $\sum_{k\ge 0}z_{k}\le z=H+2v\sqrt{2x}+ 2bx$,
\begin{eqnarray*}
\P\pa{Z \ge z} &\le& \P\pa{\exists t,\ \exists k\ge 0,\ \ X_{\pi_{k+1}(t)}-X_{\pi_{k}(t)}\ge  z_{k}}\\
&\le& \sum_{k\ge 0}\sum_{(s,u)\in E_{k}}\P\pa{X_{u}-X_{s}\ge z_{k}}
\end{eqnarray*}
where 
\[
E_{k}=\ac{\pa{\pi_{k}(t),\pi_{k+1}(t)}|\ t\in T}.
\]
Since $\A_{k+1}\subset \A_{k}$, $\pi_{k}(t)$ and $\pi_{k+1}(t)$ belong to a same element of $\A_{k}$ and therefore $d(s,u)\le 2^{-k}v$ and $c\delta(s,u)\le 2^{-k}b$ for all pairs $(s,u)\in E_{k}$. 
Besides, under Assumption~\ref{momexp}, the random variable $X=X_{u}-X_{s}$ with $(s,u)\in E_{k}$ is centered and satisfies~\eref{exp} with $2^{-k}v$ and $2^{-k}b$ in place of $v$ and $c$. Hence, by using Bernstein's inequality~\eref{bern}, we get for all $(s,u)\in E_{k}$ and $k\ge 0$
\[
\P\pa{X_{u}-X_{s}\ge z_{k}}\le 2^{-(k+1)}N_{k}^{-1}e^{-x}\le 2^{-(k+1)}|E_{k}|^{-1}e^{-x}.
\]
Finally, we obtain inequality~\eref{sva} summing up this inequalities over $(s,u)\in E_{k}$ and $k\ge 0$. 

\subsection{Proof of Theorem~\ref{norm}}
We only prove~\eref{svanorm}, the argument for proving~\eref{vanorm} being the same as that for proving~\eref{va}. For $t\in S$ and $r>0$, we denote by $B_{2}(t,r)$ and $B_{\infty}(t,r)$ the balls centered at $t$ of radius $r$ associated to $\|\ \|_{2}$ and $\|\ \|_{\infty}$ respectively. In the sequel, we shall use the following result on the entropy of those balls. 

\begin{prop}\label{entropie}
Let $\|\ \|$ be an arbitrary norm on $S$ and $B(0,1)$ the corresponding unit ball. For each $\delta\in (0,1]$, the minimal number $\NN(\delta)$ of balls of radius $\delta$ (with respect to $\|\ \|$) which are necessary to cover $B(0,1)$ satisfies 
\[
\NN(\delta) \le \pa{1+2\delta^{-1}}^{D}.
\]
\end{prop}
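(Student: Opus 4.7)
The plan is a classical volume/packing argument applied to a maximal $\delta$-separated subset of $B(0,1)$. First, I would let $\ac{x_{1},\ldots,x_{N}}$ be a maximal family of points in $B(0,1)$ satisfying $\|x_{i}-x_{j}\|\ge \delta$ for all $i\ne j$. Such a maximal family exists since $S$ is finite-dimensional (hence $B(0,1)$ is compact with respect to $\|\ \|$). By maximality, any $y\in B(0,1)$ must lie within $\|\ \|$-distance strictly less than $\delta$ of some $x_{i}$, for otherwise $\ac{x_{1},\ldots,x_{N},y}$ would still be $\delta$-separated. Hence the balls $B(x_{i},\delta)$ cover $B(0,1)$, which immediately yields $\NN(\delta)\le N$.

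The remaining task is to bound $N$ from above by $(1+2\delta^{-1})^{D}$. For this I would pass to a Lebesgue measure on $S$: since $\dim S=D$, pick any linear isomorphism $\varphi:S\to\R^{D}$ and let $\mu$ be the pullback of Lebesgue measure under $\varphi$. The key property is that $\mu$ is translation invariant and satisfies $\mu(rA)=r^{D}\mu(A)$ for every $r>0$ and every Borel set $A\subset S$, because $\varphi$ is linear. In particular $\mu(B(x,r))=r^{D}\mu(B(0,1))$ for every $x\in S$ and $r>0$, and $0<\mu(B(0,1))<+\infty$.

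Next I would consider the smaller balls $B(x_{i},\delta/2)$. By the triangle inequality combined with $\|x_{i}-x_{j}\|\ge \delta$, these balls are pairwise disjoint. Moreover, since $\|x_{i}\|\le 1$, each one is contained in $B(0,1+\delta/2)$. The disjointness and containment give
\[
N\pa{\delta\over 2}^{D}\mu(B(0,1))=\sum_{i=1}^{N}\mu\pa{B(x_{i},\delta/2)}\le \mu\pa{B(0,1+\delta/2)}=\pa{1+{\delta\over 2}}^{D}\mu(B(0,1)).
\]
Cancelling $\mu(B(0,1))>0$ and rearranging yields $N\le (1+2\delta^{-1})^{D}$, which combined with $\NN(\delta)\le N$ gives the claim.

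There is no real obstacle here; the only point requiring a little care is that the statement is for an abstract norm on an abstract $D$-dimensional space $S$, so one has to invoke a linear isomorphism with $\R^{D}$ in order to have at one's disposal a measure which is translation invariant and behaves correctly under dilations. Once that is set up, the volume comparison is routine and optimal up to the constant $2$ inside the bracket.
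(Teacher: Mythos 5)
Your proof is correct and follows essentially the same route as the paper's: a maximal $\delta$-separated subset of $B(0,1)$ gives a covering by balls of radius $\delta$, and its cardinality is bounded by comparing the volumes of the disjoint balls $B(x_{i},\delta/2)$ with that of $B(0,1+\delta/2)$, using a translation-invariant, degree-$D$ homogeneous measure on $S$ (the paper simply assumes $S=\R^{D}$ without loss of generality where you pull back Lebesgue measure). The only cosmetic difference is the order of the two steps and your appeal to compactness for the existence of a maximal separated family, which the paper instead deduces from the volume bound itself.
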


This lemma can be found in Birg\'e~\citeyearpar{MR722129} (Lemma 4.5, p. 209) but we provide a proof below to keep this paper as self-contained as possible. 

\begin{proof}
With no loss of generality, we may assume that $S=\R^{D}$. Let $\delta\in (0,1]$. A subset $\T$ of $B(0,1)$ is called $\delta$-separated if for all $s,t\in\T$, $\|s-t\|>\delta$. If $\T$ is $\delta$-separated, the family of (open) balls centered at those $t\in\T$ with radius $\delta/2$ are all disjoint and included in the ball $B(0,1+\delta/2)$. By a volume argument (with respect to the Lebesgue measure on $\R^{D}$), we deduce that $\T$ is finite and satisfies  $|\T|\le (1+2\delta^{-1})^{D}$. Consider now a maximal $\delta$-separated set $\T$, that is  
\[
|\T|=\max_{\T'}|\T'|
\]
where $\T'$ runs among the family of all the $\delta$-separated subset of $B(0,1)$. By definition, for all $t\in B(0,1)\setminus \T$, $\T\cup\ac{t}$ is no longer a $\delta$-net and therefore that the family of balls $\ac{B(t,\delta),\ t\in\T}$ covers $B(0,1)$.  Consequently 
\[
\NN(\delta)\le |\T|\le (1+2\delta^{-1})^{D}.
\]
\end{proof}

Let us now turn to the proof of~\eref{svanorm}. Note that it is enough to prove that for some $u<H+2\sqrt{2v^{2}x}+ 2bx$ and all finite sets $T$ satisfying inequalities~\eref{debase} and~\eref{contraintes}  
\[
\P\pa{\sup_{t\in T}\pa{X_{t}-X_{t_{0}}} > u}\le e^{-x}.
\]
Indeed, for any sequence $\pa{T_{n}}_{n\ge 0}$ of finite subsets of $T$ increasing towards $T$, that is, satisfying $T_{n}\subset T_{n+1}$ for all $n\ge 0$ and $\bigcup_{n\ge 0}T_{n}=T$, the sets 
\[
\ac{\sup_{t\in T_{n}}\pa{X_{t}-X_{t_{0}}} > u}
\]
increases (for the inclusion) towards $\ac{Z>u}$. Therefore, 
\[
\P\pa{Z> u}=\lim_{n\to +\infty}\P\pa{\sup_{t\in T_{n}}\pa{X_{t}-X_{t_{0}}} > u}.
\]
Consequently, we shall assume hereafter that $T$ is finite. 

For $k\ge 0$ and $j\in\ac{2,\infty}$ define the sets $\A_{j,k}$ as follows. We first consider the case $j=2$. For $k=0$, $\A_{2,0}=\ac{T}$. By applying Proposition~\ref{entropie} with $\|\ \|=\|\ \|_{2}/v$ and $\delta=1/4$, we can cover $T\subset B_{2}(t_{0},v)$ with at most $9^{D}$ balls with radius $v/4$. From such a finite covering $\ac{B_{1},\ldots,B_{N}}$ with $N\le 9^{D}$, it is easy to derive a partition $\A_{2,1}$ of $T$ by at most $9^{D}$ sets of diameter not larger than $v/2$. Indeed, $\A_{2,1}$ can merely consist of the non-empty sets among the family 
\[
\ac{\pa{B_{k}\setminus\bigcup_{1\le \ell<k}B_{\ell}}\cap T,\ \ k=1,\ldots,N}
\]
(with the convention $\bigcup_{\varnothing}=\varnothing$). Then, for $k\ge 2$, proceed by induction using Proposition~\ref{entropie} repeatedly. Each element $A\in\A_{2,k-1}$ is a subset of a ball of radius $2^{-k}v$ and can be partitioned similarly as before into $5^{D}$ subsets of balls of radii $2^{-(k+1)}v$. By doing so, the partitions $\A_{2,k}$ with $k\ge 1$ satisfy $\A_{2,k}\subset \A_{2,k-1}$, $|\A_{2,k}|\le (1.8)^{D}\times 5^{kD}$ and for all $A\in\A_{2,k}$, 
\[
\sup_{s,t\in A}\|s-t\|_{2}\le 2^{-k}v.
\]
Let us now turn to the case $j=+\infty$. If $c>0$, define the partitions $\A_{\infty, k}$ in exactly the same way as we did for the $\A_{2, k}$. Similarly, the partitions $\A_{\infty,k}$ with $k\ge 1$ satisfy $\A_{\infty,k}\subset \A_{\infty,k-1}$, $|\A_{\infty,k}|\le (1.8)^{D}\times 5^{kD}$ and for all $A\in\A_{\infty,k}$, 
\[
\sup_{s,t\in A}c\|s-t\|_{\infty}\le 2^{-k}b.
\]
When $c=0$, we simply take $\A_{\infty, k}=\ac{T}$ for all $k\ge 0$ and note that the properties above are fulfilled as well.

Finally, define the partition $\A_{k}$ for $k\ge 0$ as that generated by $\A_{2,k}$ and $\A_{\infty,k}$, that is 
\[
\A_{k}=\ac{A_{2}\cap A_{\infty}|\ A_{2}\in\A_{2,k},\ A_{\infty}\in\A_{\infty,k}}.
\]
Clearly, $\A_{k+1}\subset \A_{k}$. Besides, $|\A_{0}|=1$ and for $k\ge 1$, 
\[
|\A_{k}|\le |\A_{2,k}||\A_{\infty,k}|\le (1.8)^{2D}\times 5^{2kD}.
\]
The set $T$ being finite, we can apply Theorem~\ref{chi2}. Actually, our construction of the $\A_{k}$ allows us to slightly gain in the constants. Going back to the proof of Theorem~\ref{chi2}, we note that 
\[
|E_{k}|=|\ac{\pa{\pi_{k}(t),\pi_{k+1}(t)}|\ t\in T}|\le |\A_{k+1}|\le 9^{2D}\times 5^{2kD}
\]
since the element $\pi_{k+1}(t)$ determines $\pi_{k}(t)$ in a unique way. This means that one can take $N_{k}=9^{2D}\times 5^{2kD}$ in the proof of Theorem~\ref{chi2}. By taking the notations of Theorem~\ref{chi2}, we have,  
\begin{eqnarray*}
H&\le &\sum_{k\ge 0}2^{-k}\cro{v\sqrt{2\log(2^{k+1}\times 9^{2D}\times 5^{2kD})}+b\log\pa{2^{k+1}\times 9^{2D}\times 5^{2kD}}}\\
&<& 14\sqrt{Dv^{2}}+18Db
\end{eqnarray*}
and using the concavity of $x\mapsto \sqrt{x}$, we get 
\begin{eqnarray*}
H+2\sqrt{2v^{2}x}+ 2bx &\le & 14\sqrt{Dv^{2}}+2\sqrt{2v^{2}x}+18b(D+x)\\
&\le& 18\pa{\sqrt{v^{2}\pa{D+x}}+b(D+x)}.
\end{eqnarray*}
which leads to the result.

\subsection{Control of $\chi^{2}$-type random variables}\label{sect-chi}
We have the following result. 

\begin{thm}\label{chi}
Let $S$ be some linear subspace of $\R^{n}$ with dimension $D$. If the coordinates of $\eps$ are independent and satisfy~\eref{bernstein}, for all $x,u>0$, 
\begin{equation}\label{c1}
\P\cro{|\Pi_{S}\eps|_{2}^{2}\ge \kappa^{2}\pa{\sigma^{2}+{2cu\over \kappa}}\pa{D+x},\ |\Pi_{S}\eps|_{\infty}\le u}\le e^{-x}
\end{equation}
with $\kappa=18$ and 
\begin{equation}\label{c2}
\P\pa{\ab{\Pi_{S}\eps}_{\infty}\ge x}\le 
2n\exp\cro{-{x^{2}\over 2\Lambda_{2}^{2}(S)\pa{\sigma^{2}+cx}}}
\end{equation}
where $\Lambda_{2}(S)$ is defined by~\eref{defL}.
\end{thm}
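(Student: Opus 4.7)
The two inequalities are handled by rather different arguments, so I treat them separately.

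For~\eref{c2}, the idea is to apply a scalar Bernstein tail bound coordinate by coordinate. For each $i\in\ac{1,\ldots,n}$,
\[
(\Pi_S\eps)_i \,=\, \<e_i,\Pi_S\eps\> \,=\, \<\Pi_S e_i,\eps\> \,=\, \sum_{j=1}^n (\Pi_S e_i)_j\,\eps_j
\]
is a weighted sum of independent centered variables. Multiplying the bounds~\eref{bernstein} applied with parameter $\lambda(\Pi_S e_i)_j$ and factoring $|(\Pi_S e_i)_j|\le|\Pi_S e_i|_\infty$ out of the Bernstein denominator shows that $\<\Pi_S e_i,\eps\>$ satisfies~\eref{exp} with variance factor $|\Pi_S e_i|_2^2\sigma^2$ and Bernstein constant $c|\Pi_S e_i|_\infty$. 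Self-adjointness of $\Pi_S$ yields the identity $(\Pi_S e_i)_j=\<\Pi_S e_i,\Pi_S e_j\>$, so by Cauchy--Schwarz and the very definition of $\Lambda_2(S)$ one has $|\Pi_S e_i|_2\le\Lambda_2(S)$ and $|\Pi_S e_i|_\infty\le\Lambda_2^2(S)$. Substituting these bounds in~\eref{bern01} applied to $\pm\<\Pi_S e_i,\eps\>$ and taking the union bound over $i$ delivers~\eref{c2}.

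For~\eref{c1}, my plan is to rewrite $|\Pi_S\eps|_2$ as $\sup_{t\in T}\<t,\eps\>$, where $T$ is a countable dense subset of the unit ball of $S$, and to use the event $\ac{|\Pi_S\eps|_\infty\le u}$ to \emph{shrink} the dual index set before invoking Theorem~\ref{norm}. Set
\[
R = \kappa\sqrt{\pa{\sigma^2+2cu/\kappa}(D+x)}.
\]
On the event $\ac{|\Pi_S\eps|_2\ge R,\ |\Pi_S\eps|_\infty\le u}$ the maximiser $t^\star=\Pi_S\eps/|\Pi_S\eps|_2$ satisfies $|t^\star|_2=1$ and $|t^\star|_\infty = |\Pi_S\eps|_\infty/|\Pi_S\eps|_2\le u/R$. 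Thus this event is contained in $\ac{\sup_{t\in T_R}\<t,\eps\>\ge R}$, where $T_R$ denotes any countable dense subset of $\ac{t\in S:|t|_2\le 1,\ |t|_\infty\le u/R}$; continuity of $t\mapsto\<t,\eps\>$ makes the passage to such a countable subset harmless.

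It then remains to apply Theorem~\ref{norm} to the process $X_t=\<t,\eps\>$ indexed by $T_R$. Multiplying~\eref{bernstein} applied with $\lambda(t_i-s_i)$ over $i$ and bounding $|t_i-s_i|\le|t-s|_\infty$ in the denominator verifies Assumption~\ref{momexp} with $d(s,t)=\sigma|s-t|_2$ and $\delta(s,t)=|s-t|_\infty$; Assumption~\ref{lineaire} is then met with $t_0=0$, $v=\sigma$ and $b=cu/R$. Theorem~\ref{norm} yields
\[
\P\pa{\sup_{t\in T_R}\<t,\eps\>\,\ge\,\kappa\pa{\sigma\sqrt{D+x}+(cu/R)(D+x)}}\le e^{-x}.
\]
Finally, $R$ is engineered so that $R\ge\kappa(\sigma\sqrt{D+x}+(cu/R)(D+x))$: from $R^2-\kappa^2\sigma^2(D+x)=2\kappa cu(D+x)$, factoring the left-hand side and bounding $R+\kappa\sigma\sqrt{D+x}\le 2R$ gives $R-\kappa\sigma\sqrt{D+x}\ge\kappa cu(D+x)/R$, which is the desired inequality. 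The main conceptual point in the plan is realising that intersecting with $\ac{|\Pi_S\eps|_\infty\le u}$ effectively restricts the dual index set to an $\ell_\infty$-ball of radius $u/R$, which is precisely the geometric structure Theorem~\ref{norm} is tailored to exploit; once this is seen, the rest is a matter of choosing $R$ so that the Bernstein-type bound of Theorem~\ref{norm} matches the target Bernstein-type tail.
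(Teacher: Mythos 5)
Your proposal is correct and follows essentially the same route as the paper: for \eref{c1} it uses the event $\ac{|\Pi_{S}\eps|_{\infty}\le u}$ to restrict the dual index set to $\ac{t\in S:\ |t|_{2}\le 1,\ |t|_{\infty}\le u/R}$ and then applies Theorem~\ref{norm} with $v=\sigma$ and $b=cu/R$, checking $R\ge \kappa\pa{\sigma\sqrt{D+x}+(cu/R)(D+x)}$ exactly as the paper does; for \eref{c2} it applies the scalar Bernstein bound to $\<\Pi_{S}e_{i},\eps\>$ with $|\Pi_{S}e_{i}|_{2}\le \Lambda_{2}(S)$, $|\Pi_{S}e_{i}|_{\infty}\le \Lambda_{2}^{2}(S)$ and a union bound over $i$. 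No gaps.
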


\begin{proof}
Let us set $\chi=|\Pi_{S}\eps|_{2}$. 
For $t\in S$, let $X_{t}=\<\eps,t\>$ and $t_{0}=0$. It follows from the independence of the $\eps_{i}$ and inequality~\eref{bernstein} that~\eref{debase} holds with $d(t,s)=\sigma|t-s|_{2}$ and $\delta(t,s)=|t-s|_{\infty}$, for all $s,t\in S$.  
The random variable $\chi$ equals the supremum of the $X_{t}$ when $t$ runs among the unit ball of $S$.  Besides, the supremum is achieved for $\hat t=\Pi_{S}\eps/\chi$ and thus, on the event $\ac{\chi\ge z,\ |\Pi_{S}\eps|_{\infty}\le u}$ 
\[
\chi=\sup_{t\in T}X_{t}\ \ {\rm with} \ T=\ac{t\in S,\ |t|_{2}\le 1,\ |t|_{\infty}\le  uz^{-1}}
\]
leading to the bound
\begin{eqnarray*}
\P\pa{\chi\ge z,\ |\Pi_{S}\eps|_{\infty}\le u}\le \P\pa{\sup_{t\in T}X_{t}\ge z}.
\end{eqnarray*}
We take $z=\kappa\sqrt{(\sigma^{2}+2cu\kappa^{-1})(D+x)}$ and (using the concavity of $x\mapsto \sqrt{x}$) note that 
\[
z\ge \kappa\pa{\sqrt{\sigma^{2}(D+x)}+cuz^{-1}(D+x)}.
\]
Then, by applying Theorem~\ref{norm} with $v=\sigma$, $b=cu/z$, we obtain~\eref{c1}.

Let us now turn to~\eref{c2}.
Under~\eref{bernstein}, we can apply Bernstein's inequality~\eref{bern} to $X=\<\eps,t\>$ and $X=\<-\eps,t\>$ with $t\in S$, $v^{2}=\sigma^{2}|t|_{2}^{2}$ and $c|t|_{\infty}$ in place of $c$ and get for all $t\in S$ and $x>0$
\begin{equation}\label{eq1}
\P\pa{|\<\eps,t\>|\ge x}\le 2\exp\cro{-{x^{2}\over 2\pa{\sigma^{2}|t|_{2}^{2}+c|t|_{\infty}x}}}.
\end{equation}
Let us take $t=\Pi_{S}e_{i}$ with $i\in\ac{1,\ldots,n}$. Since $|t|_{2}\le \Lambda_{2}(S)$ and 
\[
|t|_{\infty}=\max_{i,i'=1,\ldots,n}\ab{\<\Pi_{S}e_{i},e_{i'}\>}=\max_{i,i'=1,\ldots,n}\ab{\<\Pi_{S}e_{i},\Pi_{S}e_{i'}\>}\le \Lambda_{2}^{2}(S),
\]
for all $i\in\ac{1,\ldots,n}$
\begin{eqnarray*}
\P\pa{|\<\Pi_{S}\eps,e_{i}\>|\ge x}&\le& 2\exp\cro{-{x^{2}\over 2\Lambda_{2}^{2}(S)\pa{\sigma^{2}+cx}}}.
\end{eqnarray*}
We obtain~\eref{c2} by summing up these probabilities for $i=1,\ldots,n$.
\end{proof}

\subsection{Proof of Theorem~\ref{selmod}}
Let us fix some $m\in\M$. It follows from simple algebra and the inequality ${\rm crit}(\hat m)\le {\rm crit}(m)$ that 
\[
\ab{f-\hat f_{\hat m}}_{2}^{2}\le \ab{f-\hat f_{m}}_{2}^{2}+2\<\eps,\hat f_{\hat m}-\hat f_{m}\>+\pen(m)-\pen(\hat m).
\]
Using the elementary inequality $2ab\le a^{2}+b^{2}$  for all $a,b\in\R$, we have for  $K>1$,
\begin{eqnarray*}
2\<\eps,\hat f_{\hat m}-\hat f_{m}\>
&\le& 2\ab{\hat f_{\hat m}-\hat f_{m}}_{2}\ab{\Pi_{S_{m}+S_{\hat m}}\eps}_{2}\\
%&\le& K^{-1}\ab{\hat f_{\hat m}-\hat f_{m}}_{2}^{2}+K\ab{\Pi_{S_{m}+S_{\hat m}}\eps}_{2}^{2}\\
&\le& K^{-1}\cro{\pa{1+{K-1\over K}}\ab{\hat f_{\hat m}-f}_{2}^{2}+\pa{1+{K\over K-1}}\ab{f-\hat f_{m}}_{2}^{2}}\\
&& \ \ \ +\ \ K\ab{\Pi_{S_{m}+S_{\hat m}}\eps}_{2}^{2},
\end{eqnarray*}
and we derive
\begin{eqnarray*}
{(K-1)^{2}\over K^{2}}\ab{f-\hat f_{\hat m}}_{2}^{2}&\le& {K^{2}+K-1\over K(K-1)}\ab{f-\hat f_{m}}_{2}^{2}+K\ab{\Pi_{S_{m}+S_{\hat m}}\eps}_{2}^{2}-\pa{\pen(\hat m)-\pen(m)}\\
&\le& {K^{2}+K-1\over K(K-1)}\ab{f-\hat f_{m}}_{2}^{2}+\pen(m)\\
&& \ \ + K\ab{\Pi_{S_{m}+S_{\hat m}}\eps}_{2}^{2}-\pa{\pen(\hat m)+\pen(m)}.\\
\end{eqnarray*}
Setting 
\begin{eqnarray*}
A_{1}(\hat m)&=& K\kappa^{2}\pa{\sigma^{2}+{2cu\over \kappa}}\pa{{\ab{\Pi_{S_{m}+S_{\hat m}}\eps}_{2}^{2}\over \kappa^{2}\pa{\sigma^{2}+{2cu\over \kappa}}}-D_{\hat m}-D_{m}-\Delta_{\hat m}-\Delta_{m}}_{+}\1\ac{\ab{\Pi_{S_{m}+S_{\hat m}}\eps}_{\infty}\le u}\\
A_{2}(\hat m)&=& K\ab{\Pi_{S_{m}+S_{\hat m}}\eps}_{2}^{2}\1\ac{\ab{\Pi_{S_{m}+S_{\hat m}}\eps}_{\infty}\ge u}
\end{eqnarray*}
and using~\eref{pen}, we deduce that
\[
{(K-1)^{2}\over K^{2}}\ab{f-\hat f_{\hat m}}_{2}^{2}\le {K^{2}+K-1\over K(K-1)}\ab{f-\hat f_{m}}_{2}^{2}+\pen(m)+A_{1}(\hat m)+A_{2}(\hat m),
\]
and by taking the expectation on both side we get 
\[
{(K-1)^{2}\over K^{2}}\E\cro{\ab{f-\hat f_{\hat m}}_{2}^{2}}\le {K^{2}+K-1\over K(K-1)}\E\cro{\ab{f-\hat f_{m}}_{2}^{2}}+\pen(m)+\E\cro{A_{1}(\hat m)}+\E\cro{A_{2}(\hat m)}.
\]
The index $m$ being arbitrary, it remains to bound  $E_{1}=\E\cro{A_{1}(\hat m)}$ and $E_{2}=\E\cro{A_{2}(\hat m)}$ from above. 

Let $m'$ be some deterministic index in $\M$. By using Theorem~\ref{chi} with $S=S_{m}+S_{m'}$ the dimension of which is not larger than $D_{m}+D_{m'}$ and integrating~\eref{c1} with respect to $x$ we get
\[
\E\cro{A(m')}\le K\kappa^{2}\pa{\sigma^{2}+{2cu\over \kappa}}e^{-\Delta_{m}-\Delta_{m'}}
\] 
and thus 
\[
E_{1}\le \sum_{m'\in\M}\E\cro{A(m')}\le  K\kappa^{2}\pa{\sigma^{2}+{2cu\over \kappa}}\Sigma.
\]

Let us now turn to $\E\cro{A_{2}(\hat m)}$. By using that $S_{\hat m}+S_{m}\subset \S$, $\ab{\Pi_{S_{\hat m}+S_{m}}\xi}_{2}^{2}\le \ab{\Pi_{\S}\xi}_{2}^{2}\le n\ab{\Pi_{\S}\xi}_{\infty}^{2}$. Besides, it follows from the definition of $\overline \Lambda_{\infty}$ that 
\[
\ab{\Pi_{S_{\hat m}+S_{m}}\xi}_{\infty}= \ab{\Pi_{S_{\hat m}+S_{m}}\Pi_{\S}\xi}_{\infty}\le \overline \Lambda_{\infty}\ab{\Pi_{\S}\xi}_{\infty}.
\]
and therefore, setting $x_{0}=\overline \Lambda_{\infty}^{-1}u$
\begin{eqnarray*}
E_{2}&\le& Kn\E\cro{\ab{\Pi_{\S}\xi}_{\infty}^{2}\1\ac{\ab{\Pi_{\S}\xi}_{\infty}\ge x_{0}}}.
\end{eqnarray*}
We shall now use the following lemma the proof of which is deferred to the end of the section.
\begin{lem}\label{lem1}
Let $X$ be some nonnegative random variable satisfying for all $x>0$, 
\begin{equation}\label{bernX}
\P\pa{X\ge x}\le a\exp\cro{-\phi(x)}\ \ \ {\rm with}\ \ \ \phi(x)= {x^{2}\over 2\pa{\alpha+\beta x}}\ \ \ 
\end{equation}
where $a, \alpha>0$ and $\beta\ge 0$. For $x_{0}>0$ such that $\phi(x_{0})\ge 1$,
\[
\E\cro{X^{p}\1\ac{X\ge x_{0}}}\le ax_{0}^{p}e^{-\phi(x_{0})}\pa{1+{ep!\over \phi(x_{0})}},\ \ \ \forall p\ge 1.
\]
\end{lem}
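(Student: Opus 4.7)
The plan is to start from the tail-integral identity
\[
\E\cro{X^p\1\ac{X\ge x_0}} \;=\; x_0^p\,\P(X\ge x_0) + \int_{x_0}^\infty p\,y^{p-1}\,\P(X\ge y)\,dy,
\]
and then substitute the assumed bound~\eref{bernX}. The boundary term contributes $a x_0^p e^{-\phi(x_0)}$, which is precisely the ``$1$'' inside the parentheses of the conclusion. Hence the whole task reduces to showing that
\[
\int_{x_0}^\infty p\,y^{p-1}\,e^{-\phi(y)}\,dy \;\le\; \frac{e\,p!\,x_0^p\,e^{-\phi(x_0)}}{\phi(x_0)}.
\]

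To produce this estimate, I would first verify the elementary monotonicity
\[
\frac{d}{dy}\pa{\frac{\phi(y)}{y}} \;=\; \frac{\alpha}{2(\alpha+\beta y)^2} \;>\; 0,
\]
which gives $\phi(y)\ge (y/x_0)\phi(x_0)$ for every $y\ge x_0$. Plugging this bound into the exponential and performing the change of variable $u=\phi(x_0)\,y/x_0$ converts the integral into an upper tail of the Gamma function:
\[
\int_{x_0}^\infty p\,y^{p-1}\,e^{-\phi(y)}\,dy \;\le\; \frac{p\,x_0^p}{\phi(x_0)^p}\,\Gamma\bigl(p,\phi(x_0)\bigr), \qquad \Gamma(p,z)=\int_z^\infty u^{p-1}e^{-u}\,du.
\]

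The main (and only non-cosmetic) obstacle is then an explicit upper bound on $\Gamma(p,z)$ valid for all $z\ge 1$ and all $p\ge 1$. I would obtain it through the substitution $u=z+w$ together with the inequality $(1+w/z)^{p-1}\le (1+w)^{p-1}$, which holds precisely because $p\ge 1$ and $z\ge 1$. This gives
\[
\Gamma(p,z) \;\le\; z^{p-1}e^{-z}\int_0^\infty (1+w)^{p-1}e^{-w}\,dw \;=\; e\,\Gamma(p)\,z^{p-1}e^{-z} \;\le\; \frac{e\,p!}{p}\,z^{p-1}e^{-z},
\]
where the last step uses $\Gamma(p)=\Gamma(p+1)/p=p!/p$. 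Applied with $z=\phi(x_0)\ge 1$, this yields the desired bound $e\,p!\,x_0^p e^{-\phi(x_0)}/\phi(x_0)$ on the integral, and combining with the boundary term produces exactly $a x_0^p e^{-\phi(x_0)}\bigl(1+ep!/\phi(x_0)\bigr)$, as claimed.
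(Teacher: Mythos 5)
Your proof is correct, and it takes a genuinely different route from the paper's in the only non-trivial step, namely the bound on $I_p=\int_{x_0}^{\infty}p\,y^{p-1}e^{-\phi(y)}\,dy$. The paper integrates by parts, writing $p y^{p-1}=\frac{py^{p-1}}{\phi'(y)}\phi'(y)$ and using that $\phi'$ is increasing, then runs an induction on $p$ that produces the finite sum $p!\,x_0^{p}e^{-\phi(x_0)}\sum_{k=0}^{p-1}\pa{x_0\phi'(x_0)}^{-(k+1)}/(p-k-1)!$, bounded by $ep!\,x_0^pe^{-\phi(x_0)}/\phi(x_0)$ via the exponential series and the inequality $x_0\phi'(x_0)\ge\phi(x_0)\ge 1$. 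You instead linearize $\phi$ from below using the star-shapedness $\phi(y)\ge (y/x_0)\phi(x_0)$ for $y\ge x_0$ (your monotonicity computation for $\phi(y)/y$ is correct) and reduce to a closed-form estimate on the upper incomplete Gamma function. Note that the two approaches exploit exactly the same structural property of $\phi$: your statement that $\phi(y)/y$ is nondecreasing is equivalent to the paper's $y\phi'(y)\ge\phi(y)$, and your hypothesis $z=\phi(x_0)\ge 1$ enters your Gamma bound at the same place the paper uses $\phi(x_0)\ge 1$. What your version buys is the avoidance of the induction, at the price of the explicit incomplete-Gamma inequality; the constants come out identical. One cosmetic slip: $\int_0^\infty(1+w)^{p-1}e^{-w}\,dw$ equals $e\,\Gamma(p,1)$, not $e\,\Gamma(p)$ (it also equals $(p-1)!\sum_{j=0}^{p-1}1/j!$), so your ``$=$'' there should be ``$\le$''; since you only need an upper bound, this does not affect the argument.
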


We apply the lemma with $p=2$ and $X=\ab{\Pi_{\S}\xi}_{\infty}$ for which we know from~\eref{c2} that~\eref{bernX} holds with $a=2n$, $\alpha=\Lambda_{2}^{2}(S)\sigma^{2}$ and $\beta=\Lambda_{2}^{2}(S)c$. Besides, it follows from the definition of $x_{0}$ and the fact that $n\ge 2$ that
\[
\phi(x_{0})={x_{0}^{2}\over 2\Lambda_{2}^{2}(S)\pa{\sigma^{2}+cx_{0}}}\ge \log\pa{n^{2}e^{z}}\ge 1. 
\]
The assumptions of Lemma~\ref{lem1} being checked, we deduce that $E_{2}\le 2Kx_{0}^{2}e^{-z}$ and conclude the proof putting these upper bounds on $E_{1}$ and $E_{2}$ together. 

Let us now turn to the proof of the lemma.

\begin{proof}[Proof of Lemma~\ref{lem1}]
Since  
\[
\E\cro{X^{p}\1\ac{X\ge x_{0}}}\le x_{0}^{p}\P\pa{X\ge x_{0}}+\int_{x_{0}}^{+\infty}px^{p-1}\P\pa{X\ge x}dx,
\]
it remains to bound from above the integral. Let us set 
\[
I_{p}=\int_{x_{0}}^{+\infty}px^{p-1}e^{-\phi(x)}dx.
\]
Note that $\phi'$ is increasing and by integrating by parts we have  
\begin{eqnarray*}
I_{p}&=&\int_{x_{0}}^{+\infty}{px^{p-1}\over \phi'(x)}\phi'(x)e^{-\phi(x)}\le {p\over \phi'(x_{0})}\cro{x_{0}^{p-1}e^{-\phi(x_{0})}+(p-1)I_{p-1}}.
\end{eqnarray*}
By induction over $p$ and using that $x_{0}\phi'(x_{0})\ge \phi(x_{0})\ge 1$ we get
\begin{eqnarray*}
I_{p}&\le& p!x_{0}^{p}e^{-\phi(x_{0})}\sum_{k=0}^{p-1}{\pa{x_{0}\phi'(x_{0})}^{-(k+1)}\over (p-k-1)!}\le {ep!x_{0}^{p}e^{-\phi(x_{0})}\over \phi(x_{0})}.
\end{eqnarray*}
\end{proof}

\subsection{An intermediate result}
The following proposition allows to bound $\Lambda_{2}(S)$ and $\Lambda_{\infty}(S)$ under suitable assumptions on an orthonormal basis of $S$. 
\begin{prop}\label{control-lambda}
Let $P$ be some partition of $\ac{1,\ldots,n}$, $J$ some nonempty index set and 
\[
\ac{\phi_{j,I},\ (j,I)\in J\times P}
\]
an orthonormal system such that for some $\Phi>0$ and all $I\in P$
\[
\sup_{j\in J}\ab{\phi_{j,I}}_{\infty}\le {\Phi\over \sqrt{|I|}}\ \ {\rm and}\ \ \<\phi_{j,I},e_{i}\>=0\  \forall i\not \in I.
\]
If $S$ is the linear span of the $\phi_{j,I}$ with $(j,I)\in J\times P$, 
\[
\Lambda_{2}^{2}(S)\le \pa{{|J|\Phi^{2}\over \min_{I\in P}|I|}}\wedge 1\ \ {\rm and}\ \ \Lambda_{\infty}(S)\le \pa{|J|\Phi^{2}}\wedge \pa{\sqrt{n}\Lambda_{2}(S)}.
\]
\end{prop}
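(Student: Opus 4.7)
The plan is to expand the orthogonal projector $\Pi_S$ in the given orthonormal basis and exploit the disjoint-support structure together with the pointwise bound $\|\phi_{j,I}\|_\infty\le \Phi/\sqrt{|I|}$. Writing, for $x\in\R^n$,
\[
\Pi_S x=\sum_{(j,I)\in J\times P}\<x,\phi_{j,I}\>\phi_{j,I},
\]
I would first note that for each $i\in\{1,\ldots,n\}$ there is exactly one $I(i)\in P$ with $i\in I(i)$, and the support condition forces $\<e_i,\phi_{j,I}\>=0$ whenever $I\ne I(i)$.

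For the bound on $\Lambda_2(S)$, I compute
\[
|\Pi_S e_i|_2^2=\sum_{(j,I)\in J\times P}\<e_i,\phi_{j,I}\>^2=\sum_{j\in J}(\phi_{j,I(i)})_i^2\le |J|\,\Phi^2/|I(i)|,
\]
where the inequality comes from the $\ell^\infty$ bound applied coordinate-wise. Maximising over $i$ gives $\Lambda_2^2(S)\le |J|\Phi^2/\min_{I\in P}|I|$. The universal bound $\Lambda_2(S)\le 1$, noted already after definition~\eref{defL} (it just expresses that $\Pi_S$ is a contraction on $\R^n$ endowed with $|\ |_2$), accounts for the $\wedge 1$.

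For the bound on $\Lambda_\infty(S)$, I expand
\[
|\Pi_S e_i|_1=\sum_{i'=1}^n\left|\sum_{(j,I)\in J\times P}\<e_i,\phi_{j,I}\>\<e_{i'},\phi_{j,I}\>\right|.
\]
The support condition kills every term with $I\ne I(i)$, and within the surviving block the inner product $\<e_{i'},\phi_{j,I(i)}\>$ vanishes unless $i'\in I(i)$. Pulling the absolute value inside and using the pointwise bound on each factor,
\[
|\Pi_S e_i|_1\le \sum_{i'\in I(i)}\sum_{j\in J}\frac{\Phi^2}{|I(i)|}=|J|\Phi^2,
\]
which is the first half of the claim. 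The second half, $\sqrt{n}\,\Lambda_2(S)$, is the general inequality $\Lambda_\infty(S)\le \sqrt{n}\,\Lambda_2(S)$ already mentioned just after~\eref{defL}, which comes from $|\cdot|_1\le\sqrt{n}\,|\cdot|_2$ on $\R^n$.

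There is no real obstacle: the whole argument is a direct bookkeeping exercise whose only subtle point is the collapsing of double sums thanks to the fact that each index $i$ sits in a single partition block. No tool beyond the basis expansion of $\Pi_S$, the hypotheses on $(\phi_{j,I})$, and the two generic inequalities $\Lambda_2(S)\le 1$ and $\Lambda_\infty(S)\le\sqrt{n}\,\Lambda_2(S)$ is required.
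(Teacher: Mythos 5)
Your proof is correct and follows essentially the same route as the paper's: expand $\Pi_S e_i$ in the orthonormal basis, use the support condition to reduce to the single block $I(i)$ containing $i$, bound the surviving coefficients by $\Phi/\sqrt{|I(i)|}$, and invoke the generic inequalities $\Lambda_2(S)\le 1$ and $\Lambda_\infty(S)\le\sqrt{n}\,\Lambda_2(S)$ for the two minima. Nothing to add.
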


\begin{proof}[Proof of Proposition~\ref{control-lambda}]
We have already seen that $\Lambda_{2}(S)\le 1$ and $\Lambda_{\infty}(S)\le \sqrt{n}\Lambda_{2}(S)$, so it remains to show that
\[
\Lambda_{2}^{2}(S)\le {|J|\Phi^{2}\over \min_{I\in P}|I|}\ \ {\rm and}\ \ \Lambda_{\infty}(S)\le |J|\Phi^{2}.
\]
Let $i=1,\ldots,n$. There exists some unique $I\in P$ such that $i\in I$ and since $\<\phi_{j,I'},e_{i}\>=0$ for all $I'\neq I$, $\Pi_{S}e_{i}=\sum_{j\in J}\<e_{i},\phi_{j,I}\>\phi_{j,I}$. Consequently, 
\[
\ab{\Pi_{S}e_{i}}_{2}^{2}=\sum_{j\in J}\<e_{i},\phi_{j,I}\>^{2}\le {|J|\Phi^{2}\over |I|}\le {|J|\Phi^{2}\over \min_{I\in P}|I|}
\]
and 
\begin{eqnarray*}
\ab{\Pi_{S}e_{i}}_{1}&=&\sum_{i'\in I}\ab{\sum_{j\in J}\<e_{i},\phi_{j,I}\>\<e_{i'},\phi_{j,I}\>}\le |I|{|J|\Phi^{2}\over |I|}\le |J|\Phi^{2}.
\end{eqnarray*}
We conclude since  $i$ is arbitrary. 
\end{proof}

\subsection{Proof of Proposition~\ref{histo}}\label{Phisto}
Let $m$ be some partition of $\ac{1,\ldots, n}$. By applying 
Proposition~\ref{control-lambda} with $J=\ac{1}$, $P=m$ and $\Phi=1$, we obtain 
\[
\Lambda_{2}^{2}(S_{m})\le {1 \over \min_{I\in m}|I|}\ \ {\rm and}\ \ \Lambda_{\infty}(S_{m})\le 1.
\]
In fact, one can check that these inequalities are equalities. Since for all $m\in\M$, $S_{m}\subset S_{\mathfrak{m}}$, we deduce that under~\eref{condhisto}
\[
\Lambda_{2}^{2}(\S)\le \Lambda_{2}^{2}(S_{\mathfrak{m}})\le {1\over a^{2}\log^{2}n}
\]
For two partitions $m,m'$ of $\ac{1,\ldots, n}$, define 
\begin{equation}\label{suppart}
m\vee m'=\ac{I\cap I'|\  I\in m,\ I'\in m'}.
\end{equation}
Since the elements of $m,m'$ for $m,m'\in\M$ consist of consecutive integers $S_{m\vee m'}=S_{m}+S_{m'}$ and therefore
\[
\overline{\Lambda}_{\infty}=\sup_{m,m'\in\M}\Lambda_{\infty}(S_{m}+S_{m'})=\sup_{m,m'\in\M}\Lambda_{\infty}(S_{m\vee m'})=1.
\]
The result follows by applying Theorem~\ref{selmod} with $z=b\log n$.
 
\subsection{Proof of Proposition~\ref{pp}}\label{Ppp}
Let  $m$ be a partition of $\ac{1,\ldots,n}$ such that for all 
$I\in m$, $I$ consists of consecutive integers and $|I|> d$. As proved in Mason \& Handscom~\citeyearpar{MR1937591}, an orthonormal basis of $S_{m}$ is given by the vectors $\phi_{j,I}$ defined by 
\[
\<\phi_{0,I},e_{i}\>={1\over \sqrt{|I|}}\1_{I}(i)
\]
and for $j=1,\ldots,d$
\[
\<\phi_{j,I},e_{i}\>=\sqrt{2\over |I|}Q_{j}\pa{\cos\pa{{(i-\min I+1/2)\pi\over |I|}}}\1_{I}(i)
\]
where $Q_{j}$ is the Chebyshev polynomial of degree $j$ defined on $[-1,1]$ by the formula
\[
Q_{j}(x)=\cos(j\theta)\ \ {\rm if}\ \ x=\cos\theta.
\]
By applying Proposition~\ref{control-lambda} with $\Phi=\sqrt{2}$, $P=m$ and $J=\ac{0,\ldots,d}$ and get
\[
\Lambda_{2}^{2}(S_{m})\le {2(d+1)\over \min_{I\in m}|I|}\ \ {\rm and}\ \ \Lambda_{\infty}(S_{m})\le 2(d+1).
\]
Since for those $m\in\M$, $S_{m}\subset S_{\mathfrak{m}}$, $\S=\sum_{m\in \M}S_{m}\subset S_{\mathfrak{m}}$ and therefore
\[
\Lambda_{2}^{2}(\S)\le \Lambda_{2}^{2}(S_{\mathfrak{m}})\le {1\over a^{2}\log^{2}n}.
\]
Moreover, since for the elements of $m$ and $m'$ for $m,m'\in\M$ consist of consecutive integers $S_{m}+S_{m'}=S_{m\vee m'}$ with $m\vee m'$ is defined by~\eref{suppart} and 
\[
\sup_{m,m'\in\M}\Lambda_{\infty}(S_{m}+S_{m'})=\sup_{m,m'\in\M}\Lambda_{\infty}(S_{m\vee m'})\le 2(d+1)
\]
which implies that $\overline{\Lambda}_{\infty}\le 2(d+1)$. It remains to apply Theorem~\ref{selmod} with $z=b\log n$. 

\subsection{Proof of Proposition~\ref{trigo}}\label{Ptrigo}
Let $\mathfrak{m}=\ac{0,\ldots,2\overline D}$. Under the 	assumption that $2\overline D+1\le \sqrt{n}/(a\log n)$,  for all $m\subset\mathfrak{m}$, the family of vectors $\ac{\phi_{j}}_{j\in m}$ is a orthonormal basis of $S_{m}$. By applying Proposition~\ref{control-lambda} with $P$ reduced to $\ac{\ac{1,\ldots,n}}$, $J=m$, $\Phi=\sqrt{2}$, we get 
\[
\Lambda_{2}^{2}(S_{m})\le {2|m|\over n}\ \ {\rm and}\ \ \Lambda_{\infty}(S_{m})\le \sqrt{n}\Lambda_{2}(S_{m})\le \sqrt{2|m|}.
\]
Since for all $m\in \M$, $S_{m}\subset S_{\mathfrak{m}}$,  $\S=\sum_{m\in\M}S_{m}\subset S_{\mathfrak{m}}$ and therefore
\[
\Lambda_{2}^{2}(\S)\le \Lambda_{2}^{2}(S_{\mathfrak{m}})\le {2(2\overline D+1)\over n}.
\]
Moreover, for all $m,m'\in\M$, $S_{m}+S_{m'}=S_{m\cup m'}$ with $m\cup m'\subset \mathfrak{m}$ and thus, 
\[
\Lambda_{\infty}(S_{m}+S_{m'})\le \sqrt{2(|m\cup m'|}\le\sqrt{2(2\overline D+1)}. 
\]
It remains to apply Theorem~\ref{selmod} with $z=b\log n$. 

\thanks{Acknowledgment: We thank Jonas Kahn for pointing out this counter-example in Subsection~\ref{jonas} and to Lucien Birg\'e for his useful comments and for making us aware of the book of Talagrand which has been the starting point of this paper.}
%\bibliography{biblio}
\bibliographystyle{apalike}

\end{document}